\makeatletter \@addtoreset{equation}{section} \makeatother
\newcommand{\eff}{{\rm \scriptscriptstyle eff}}
\newcommand{\be}{\begin{equation}}
\newcommand{\ee}{\end{equation}}
\newcommand{\ba}{\begin{array}}
\newcommand{\ea}{\end{array}}
\newcommand{\disp}{\displaystyle}
\begin{document}



\newtheorem{theorem}{Theorem}[section]
\newtheorem{lemma}{Lemma}[section]
\newtheorem{definition}{Definition}[section]
\newtheorem{remark}{Remark}[section]
\newtheorem{corollary}{Corollary}[section]
\newtheorem{proposition}{Proposition}[section]

\renewcommand{\abstractname}{Abstract}
\renewcommand{\refname}{References}

\newcommand{\ve}{\varepsilon}
\newcommand{\he}{{\mathcal{H}^\varepsilon}}
\newcommand{\dx}[1]{\frac{d #1}{d x_1}}
\newcommand{\ddx}[1]{\frac{d^2 #1}{d x_1^2}}


\title{Localization effect for a spectral problem in a perforated domain with Fourier boundary conditions}

\author{
V.~Chiad\`{o} Piat \thanks{Politecnico di Torino, Corso Duca degli Abruzzi 24, 10129 Torino, Italy (vchiado@polito.it)} \and
I. Pankratova \thanks{Narvik University College, Postbox 385, 8505 Narvik, Norway (iripan@hin.no)} \and
A. Piatnitski \thanks{Narvik University College, Postbox 385, 8505
Narvik, Norway; Lebedev Physical Institute RAS, Leninski ave., 53, 119991 Moscow, Russia (andrey@sci.lebedev.ru)}}

\date{}
\maketitle

\abstract{We consider a homogenization of elliptic spectral problem stated in a perforated domain, Fourier boundary conditions being imposed on the boundary of perforation. The presence of a locally periodic coefficient in the boundary operator gives rise to the effect of a localization of the eigenfunctions. Moreover, the limit behaviour of the lower part of the spectrum can be described in terms of an auxiliary harmonic oscillator operator.
We describe the asymptotics of the eigenpairs and derive the estimates for the rate of convergence.}

\bigskip

{\bf Keywords}:\, Homogenization, spectral problem, localization.
\section{Introduction}

The paper deals with a spectral problem for a second order
divergence form elliptic operator in a periodically perforated bounded domain in $\mathbb R^d$. Assuming that on the perforation border a homogeneous Fourier boundary condition is stated, and that the coefficient of the boundary operator is a function of "slow" argument, we arrive at the following eigenvalue problem
\begin{equation}
\label{intr_or-prob}
\left\{
\begin{array}{lcr}
\displaystyle
- {\rm div}(a(x/\ve) \nabla u^\ve(x)) = \lambda^\ve u^\ve(x), \quad \hfill x \in \Omega_\ve,
\\[3mm]
a(x/\ve) \nabla u^\ve(x)\cdot n = - q(x)u^\ve(x), \quad \hfill x \in \Sigma_\ve,
\\[3mm]
u^\ve(x)=0, \quad \hfill x \in \partial \Omega;
\end{array}
\right.
\end{equation}
here $\ve$ is a small positive parameter defined as a microstructure period.

We impose some natural regularity and connectedness conditions on the perforated domain $\Omega_\ve$, as well as usual periodicity and uniform ellipticity conditions on
the matrix $a(y)$. These conditions are specified in detail in the next section.

Our crucial assumptions are
\begin{itemize}
\item $q\in C^2(\overline\Omega)$, and
$q(x)\geq q_0>0$ in $\overline\Omega$.
\item The function $q$ has only one global minimum point
in $\overline\Omega$. The global minimum is attained at an interior point of $\Omega$.
\item The Hessian matrix $\partial^2 q/\partial x^2$ evaluated at
the minimum point is positive definite.
\end{itemize}

Under the first two assumptions the localization phenomenon holds.
Namely, for any $k\in\mathbb N$ the $k$-th eigenfunction of problem (\ref{intr_or-prob}) is asymptotically localized, as $\ve\to0$, in a small neighbourhood of the minimum point. In particular, the properly normalized principal eigenfunction converges to a $\delta$-function supported at the minimum point.

In the paper, assuming that all the above conditions are fulfilled, we construct the first two leading terms of the asymptotic expansions for the $k$-th eigenpair, $k=1,2,\dots$.

These asymptotic expansions have a number of interesting features. First of all, the mentioned expansions are in integer powers
of $\ve^{1/4}$.
Then, the localization takes place in the scale $\ve^{1/4}$.
In this scale the leading term of the asymptotic expansion
for the $k$-th eigenfunction proved to be the $k$-th eigenfunction of an auxiliary harmonic oscillator operator.

If $q\in C^3(\overline\Omega)$, then we also obtain the estimates for the rate of convergence.

We suppose that $q$ does not oscillate just for presentation simplicity. The techniques developed in the paper also apply
to the case of locally periodic coefficients $q=q(x,x/\ve)$,
 \
$a=a(x,x/\ve)$ with $q(x,y)$ and $a(x,y)$ being periodic in $y$, see Remark \ref{remark-loc-per} and Theorem \ref{Th-short-loc-per} below.

Previously, the localization phenomenon in spectral problems
has been observed in several mathematical works.
In \cite{AlPi-2002} the operator with a large locally periodic potential has been considered. The localization appeared due
to the presence of a large factor in the potential and the fact that the operator coefficients depend on slow variable.

In \cite{FrSo-2009} the Dirichlet spectral problem for the Laplacian in a thin 2D strip of slowly varying thickness
has been studied. Here the localization has been observed
in the vicinity of the point of maximum thickness. The large
parameter is the first eigenvalue of 1D Laplacian in the cross-section. This eigenvalue grows to infinity because the
thickness of the strip asymptotically vanishes.

In the mentioned works, under natural non-degeneracy conditions, the asymptotics of the eigenpairs was
described in terms of the spectrum of an appropriate harmonic
oscillator operator.
However, the localization scale was of order
$\sqrt{\ve}$ with $\ve$ being the microscopic length scale.

The localization in the scale $\ve^{1/4}$ that is observed
in the present paper, is not standard. It should also be noted
that although the operators in (\ref{intr_or-prob}) do not contain a large parameter, such a parameter is presented implicitly
because $(d-1)$-dimensional volume of the perforation surface
tends to infinity.

The homogenization of spectral problem (\ref{intr_or-prob}) with a constant or periodic functions $a$ and $q$ has been addressed in \cite{Pastukh-01}.

Spectral problems in perforated domains with Dirichlet and Neumann boundary condition at the perforation border are now well studied. There is a vast literature on the topic, see, for instance, \cite{Van-81}, \cite{OlYoSh}.

In the paper we combine asymptotic expansion techniques with
various variational and compactness arguments and scaled
trace and Poincar\'e type inequalities.

\section{Problem statement}
We start by describing the geometry of the domain. Let $K= [0,1)^d$ and $E \subset \mathbb{R}^d$  be a $K$-periodic, open, connected set with a Lipschitz boundary $\Sigma$; the complement $\mathbb{R}^d \setminus E$ is denoted by $B$. We also assume that $K \cap E$ is a connected set, and $K \cap B \Subset K$, so that $B = \mathbb{R}^d \setminus E$ consists of disjoint components. In what follows, $Y = K \cap E$ denotes the periodicity cell, and $\Sigma^0=K\cap \partial B=K \cap \Sigma$ the boundary of the inclusion.
The symbols $|Y|_d$ and $|\Sigma^0|_{d-1}$ stand for the measures of $Y$ and the $(d-1)$-dimensional surface measure of $\Sigma^0$, respectively.

For every $i \in \mathbb{Z}^d$ we denote $Y_\ve^i = \ve(i + Y)$, $\Sigma_\ve^i = \ve \Sigma \bigcap Y_\ve^i$, and $B_\ve^i = \ve B \bigcap Y_\ve^i$. Given $\Omega$, a bounded domain in $\mathbb{R}^d$ with a Lipschitz boundary $\partial \Omega$, we introduce the perforated domain
$$
\Omega_\ve = \Omega \setminus \bigcup \limits_{i \in I_\ve} B_\ve^i, \quad I_\ve = \{i \in \mathbb{Z}^d: \,\, Y_\ve^i \subset \Omega\}.
$$
Notice that $\Omega_\ve$ remains connected, the perforation does not intersect the boundary $\partial \Omega$, and
$$
\partial \Omega_\ve = \partial \Omega \bigcup \Sigma_\ve, \quad \Sigma_\ve = \bigcup \limits_{i \in I_\ve} \Sigma_\ve^i.
$$
In the perforated domain $\Omega_\ve$ we consider the following spectral problem:
\begin{equation}
\label{or-prob}
\left\{
\begin{array}{lcr}
\displaystyle
- {\rm div}(a^\ve(x) \nabla u^\ve(x)) = \lambda^\ve u^\ve(x), \quad \hfill x \in \Omega_\ve,
\\[3mm]
a^\ve(x) \nabla u^\ve(x)\cdot n = - q(x)u^\ve(x), \quad \hfill x \in \Sigma_\ve,
\\[3mm]
u^\ve(x)=0, \quad \hfill x \in \partial \Omega.
\end{array}
\right.
\end{equation}
Here $\ve$ is a small positive parameter, $a^\ve(x)= a(x/\ve)$ with $a(y)$ being a $d \times d$ matrix, $n$ is an outward unit normal; the usual scalar product in $\mathbb{R}^d$ is denoted by $"\cdot"$.

\begin{figure}
\centering
\psset{xunit=0.9cm,yunit=0.9cm,algebraic=true,dotstyle=*,dotsize=3pt 0,linewidth=0.8pt,arrowsize=3pt 2,arrowinset=0.25}
\begin{pspicture*}(-3.07,-0.96)(3.85,4.47)
\rput{169.41}(0.45,1.72){\psellipse(0,0)(3.24,2.48)}
\pscircle(-0.75,3.75){0.17}
\pscircle(-0.25,3.75){0.17}
\pscircle(0.25,3.75){0.17}
\pscircle(0.75,3.75){0.17}
\pscircle(1.25,3.75){0.17}
\pscircle(-0.75,3.25){0.17}
\pscircle(-0.25,3.25){0.17}
\pscircle(0.25,3.25){0.17}
\pscircle(0.75,3.25){0.17}
\pscircle(1.25,3.25){0.17}
\pscircle(1.75,3.25){0.17}
\pscircle(2.25,3.25){0.17}
\pscircle(-1.25,3.25){0.17}
\pscircle(-1.75,3.25){0.17}
\pscircle(-1.75,2.75){0.17}
\pscircle(-1.25,2.75){0.17}
\pscircle(-0.75,2.75){0.17}
\pscircle(-0.25,2.75){0.17}
\pscircle(0.25,2.75){0.17}
\pscircle(0.75,2.75){0.17}
\pscircle(1.25,2.75){0.17}
\pscircle(1.75,2.75){0.17}
\pscircle(2.25,2.75){0.17}
\pscircle(2.75,2.75){0.17}
\pscircle(2.75,2.25){0.17}
\pscircle(-1.75,2.25){0.17}
\pscircle(-1.25,2.25){0.17}
\pscircle(-0.75,2.25){0.17}
\pscircle(-0.25,2.25){0.17}
\pscircle(0.25,2.25){0.17}
\pscircle(0.75,2.25){0.17}
\pscircle(1.25,2.25){0.17}
\pscircle(1.75,2.25){0.17}
\pscircle(2.25,2.25){0.17}
\pscircle(2.75,2.25){0.17}
\pscircle(-1.75,1.75){0.17}
\pscircle(-1.25,1.75){0.17}
\pscircle(-0.75,1.75){0.17}
\pscircle(-0.25,1.75){0.17}
\pscircle(0.25,1.75){0.17}
\pscircle(0.75,1.75){0.17}
\pscircle(1.25,1.75){0.17}
\pscircle(1.75,1.75){0.17}
\pscircle(2.25,1.75){0.17}
\pscircle(2.75,1.75){0.17}
\pscircle(3.25,1.75){0.17}
\pscircle(-0.25,1.25){0.17}
\pscircle(-0.75,1.25){0.17}
\pscircle(-1.25,1.25){0.17}
\pscircle(-1.75,1.25){0.17}
\pscircle(0.25,1.25){0.17}
\pscircle(0.75,1.25){0.17}
\pscircle(1.25,1.25){0.17}
\pscircle(1.75,1.25){0.17}
\pscircle(2.25,1.25){0.17}
\pscircle(2.75,1.25){0.17}
\pscircle(3.25,1.25){0.17}
\pscircle(-1.75,0.75){0.17}
\pscircle(-1.25,0.75){0.17}
\pscircle(-0.75,0.75){0.17}
\pscircle(-0.25,0.75){0.17}
\pscircle(0.25,0.75){0.17}
\pscircle(0.75,0.75){0.17}
\pscircle(1.25,0.75){0.17}
\pscircle(1.75,0.75){0.17}
\pscircle(2.25,0.75){0.17}
\pscircle(2.75,0.75){0.17}
\pscircle(2.75,0.25){0.17}
\pscircle(2.25,0.25){0.17}
\pscircle(-0.75,0.25){0.17}
\pscircle(-1.25,0.25){0.17}
\pscircle(-0.25,0.25){0.17}
\pscircle(0.25,0.25){0.17}
\pscircle(0.75,0.25){0.17}
\pscircle(1.25,0.25){0.17}
\pscircle(1.75,0.25){0.17}
\pscircle(-0.75,-0.25){0.17}
\pscircle(-0.25,-0.25){0.17}
\pscircle(0.25,-0.25){0.17}
\pscircle(0.75,-0.25){0.17}
\pscircle(1.25,-0.25){0.17}
\pscircle(1.75,-0.25){0.17}
\psline[linewidth=0.4pt](-1.87,2.9)(-2.45,3.74)
\psline[linewidth=0.4pt](-1.4,2.9)(-2.45,3.74)
\psline[linewidth=0.4pt](-1.92,2.19)(-2.45,3.74)
\rput[tl](-2.5,4.11){$\Sigma_\ve$}
\rput[tl](3.16,3.43){$\Omega_\ve$}
\pscircle(-2.25,2.75){0.17}
\pscircle(-2.25,2.25){0.17}
\pscircle(-2.25,1.75){0.17}
\pscircle(-2.25,1.25){0.17}
\end{pspicture*}
\caption{Domain $\Omega_\ve$}
\end{figure}


In the sequel we assume that the following conditions hold true:
\begin{itemize}
\item[\textbf{(H1)}]
$a(y)$ is a real symmetric $d \times d$ matrix satisfying the uniform ellipticity condition
$$
\sum\limits_{i,j =1}^d a_{ij}(y) \xi_i \xi_j \ge \Lambda\, |\xi|^2, \quad \xi \in \mathbb{R}^d,
$$
for some $\Lambda>0$.
\item[\textbf{(H2)}]
The coefficients  $a_{ij}(y)$ are  $Y$-periodic and, moreover,  $a_{ij}(y) \in L^\infty(\mathbb{R}^d)$.
\item[\textbf{(H3)}]
The function $q(x) \in C^3(\mathbb{R}^d)$ is positive.
\item[\textbf{(H4)}]
The function $q(x)$ has a unique global minimum attained at $x=0 \in \Omega$. Moreover, in the vicinity of $x=0$
$$
q(x)= q(0) + \frac{1}{2}\, x^T\, H(q)\, x + o(|x|^2),
$$
with the positive definite Hessian matrix $H(q)$.
\end{itemize}

It is convenient to introduce the notation
$$
H_0^1(\Omega_\ve, \partial \Omega) = \{ u \in H^1(\Omega_\ve): \,\, u =0 \,\, \mbox{\rm on} \,\, \partial \Omega\}.
$$
The weak formulation of spectral problem \eqref{or-prob} reads: find $\lambda^\ve \in \mathbb{C}$ (eigenvalues) and $u^\ve \in H_0^1(\Omega_\ve, \partial \Omega)$, $u^\ve \neq 0$, such that
\begin{equation}
\label{weak-or-prob}
\int \limits_{\Omega_\ve} a^\ve \nabla u^\ve \cdot \nabla v\, dx +
\int \limits_{\Sigma_\ve} q\, u^\ve\, v\, d\sigma =
\lambda^\ve \int \limits_{\Omega_\ve} u^\ve\, v\, dx, \quad v \in H_0^1(\Omega).
\end{equation}
\begin{lemma}
\label{lemma-struc-spectrum}
For any $\ve >0$, the spectrum of problem \eqref{weak-or-prob} is real and consists of a countable set of points
$$
0 < \lambda_1^\ve < \lambda_2^\ve \le \cdots \le \lambda_j^\ve \le \cdots \to +\infty.
$$
Every eigenvalue has a finite multiplicity. The corresponding eigenfunctions normalized by
$$
\int \limits_{\Omega_\ve} u_i^\ve\, u_j^\ve\, dx  = \delta_{ij},
$$
form an orthonormal basis in $L^2(\Omega_\ve)$.
\end{lemma}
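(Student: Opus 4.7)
The plan is to recast the weak formulation \eqref{weak-or-prob} as an eigenvalue problem for a compact self-adjoint operator on $L^2(\Omega_\ve)$ and then invoke the standard spectral theorem. Define on $V_\ve := H_0^1(\Omega_\ve,\partial\Omega)$ the bilinear form
\begin{equation*}
\mathcal{A}^\ve(u,v) = \int_{\Omega_\ve} a^\ve \nabla u\cdot \nabla v\, dx + \int_{\Sigma_\ve} q\, u\, v\, d\sigma .
\end{equation*}
By \textbf{(H1)}--\textbf{(H2)}, $\mathcal{A}^\ve$ is symmetric and continuous on $V_\ve$; by \textbf{(H3)} it is also non-negative in the boundary term. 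Coercivity
$\mathcal{A}^\ve(u,u)\ge c_\ve \|u\|_{H^1(\Omega_\ve)}^2$ follows from the uniform ellipticity of $a$ together with the Poincaré inequality on $V_\ve$ (valid since $u$ vanishes on $\partial\Omega$ and $\Omega_\ve$ is bounded and connected). In particular $\mathcal{A}^\ve$ is an inner product on $V_\ve$ equivalent to the $H^1$ one.

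Given $f\in L^2(\Omega_\ve)$, Lax--Milgram produces a unique $u=T_\ve f\in V_\ve$ satisfying $\mathcal{A}^\ve(u,v)=\int_{\Omega_\ve} f\, v\, dx$ for every $v\in V_\ve$. The operator $T_\ve:L^2(\Omega_\ve)\to L^2(\Omega_\ve)$ is symmetric (by symmetry of $\mathcal{A}^\ve$) and positive (by coercivity), and it is compact by the Rellich--Kondrachov embedding $V_\ve\hookrightarrow L^2(\Omega_\ve)$ (here one uses that $\Omega_\ve$ is Lipschitz). A pair $(\lambda^\ve,u^\ve)$ solves \eqref{weak-or-prob} iff $T_\ve u^\ve = (\lambda^\ve)^{-1} u^\ve$ with $u^\ve\ne 0$, so the spectral theorem for compact self-adjoint positive operators yields a sequence of positive eigenvalues $\mu_j^\ve\searrow 0$ of finite multiplicity, whose eigenfunctions form an orthonormal basis of $L^2(\Omega_\ve)$. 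Setting $\lambda_j^\ve = 1/\mu_j^\ve$ gives the claimed sequence $0<\lambda_1^\ve \le \lambda_2^\ve \le\cdots\to+\infty$, with the lower bound $\lambda_1^\ve \ge c_\ve>0$ coming directly from coercivity.

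It remains to prove the \emph{strict} inequality $\lambda_1^\ve<\lambda_2^\ve$, i.e.\ simplicity of the principal eigenvalue. This is where a little care is needed. By the Courant--Fisher min-max characterization,
\begin{equation*}
\lambda_1^\ve = \min_{u\in V_\ve\setminus\{0\}}\frac{\mathcal{A}^\ve(u,u)}{\|u\|_{L^2(\Omega_\ve)}^2},
\end{equation*}
and since $\mathcal{A}^\ve(|u|,|u|)=\mathcal{A}^\ve(u,u)$ (the gradient part using $|\nabla |u||=|\nabla u|$ a.e.\ and the boundary part since $q\ge 0$), we can choose a minimizer $u_1^\ve\ge 0$. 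By elliptic regularity $u_1^\ve$ is continuous in $\overline{\Omega_\ve}\setminus\partial\Omega$, and the strong maximum principle (or a Harnack inequality for the Robin problem on the connected set $\Omega_\ve$) forces $u_1^\ve>0$ in the interior of $\Omega_\ve$. Any second eigenfunction of $\lambda_1^\ve$ could be chosen $L^2$-orthogonal to $u_1^\ve$, hence sign-changing, but the same symmetrization argument would then produce another non-negative minimizer, contradicting positivity via orthogonality. Hence $\lambda_1^\ve$ is simple.

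The main obstacle is precisely this last step: the strong maximum principle has to be applied in the perforated domain with mixed Dirichlet/Robin boundary data, which requires the connectedness of $\Omega_\ve$ (already assumed) and a Harnack argument up to the perforation boundary where the Fourier condition is imposed; once that is in place, everything else is a routine application of Lax--Milgram, Rellich compactness, and the Hilbert--Schmidt spectral theorem.
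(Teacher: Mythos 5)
Your proof is correct and follows the standard operator-theoretic route that the paper has in mind (the paper omits the proof of this lemma, calling it classical): Lax--Milgram to build the resolvent operator $T_\ve$, Rellich--Kondrachov for compactness, Hilbert--Schmidt spectral theorem for the eigenbasis, and the strong maximum principle on the connected set $\Omega_\ve$ together with $|u|$-symmetrization for the simplicity of $\lambda_1^\ve$. One minor inaccuracy: the equality of the boundary terms $\int_{\Sigma_\ve} q\,|u|^2\,d\sigma = \int_{\Sigma_\ve} q\,u^2\,d\sigma$ is an identity and does not rely on $q\ge 0$ as you state, but this does not affect the argument.
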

We omit the proof of Lemma~\ref{lemma-struc-spectrum} which is classical.

Under the assumptions \textbf{(H1)-(H4)} we study the asymptotic behaviour of eigenpairs $(\lambda^\ve, u^\ve)$, as $\ve \to 0$.

To avoid excessive technicalities for the moment, we state our main result in a slightly reduced form, without specifying the rate of convergence. For the detailed formulation of the main result see Theorem~\ref{Th-main-full}.
\begin{theorem}
\label{Th-short}
Let conditions \textbf{(H1)-(H4)} be fulfilled. If $(\lambda_j^\ve, u_j^\ve)$ stands for the $j$th eigenpair of problem \eqref{or-prob}, then
for any $j$, the following representation takes place:
$$
\lambda_j^\ve = \frac{1}{\ve}\, \frac{|\Sigma^0 |_{d-1}}{|Y|_d}\, q(0) + \frac{\mu_j^\ve}{\sqrt{\ve}},
\quad u_j^\ve(x) = v_j^\ve \big(\frac{x}{\ve^{1/4}}\big),
$$
where $(\mu_j^\ve, v_j^\ve(z))$ are such that
\begin{itemize}
\item
$\mu_j^\ve$ converges, as $\ve \to 0$, to the $j$th eiegnvalue $\mu_j$ of the effective spectral problem
\be
\label{eff-prob-intro}
-{\rm div}(a^\eff \nabla v) + (z^T Q z)\, v = \mu\, v, \quad
v \in L^2(\mathbb{R}^d),
\ee
where $a^\eff$ is a positive definite matrix (see \eqref{a^eff}); $Q$ is defined by
$$
Q = \frac{1}{2}\, \frac{|\Sigma^0|_{d-1}}{|Y|_d}\,H(q),
$$
with $H(q)$ being the Hessian matrix of $q$ at $x=0$.
.
\item
If $\mu_j$ is a simple eigenvalue, then, for small $\ve$, $\mu_j^\ve$ is also simple, and the convergence of the corresponding eigenfunctions (extended to the whole $\mathbb{R}^d$) holds
$$
\|v_j^\ve - v_j\|_{L^2(\mathbb{R}^d)} \to 0, \quad \ve \to 0.
$$
\end{itemize}
\end{theorem}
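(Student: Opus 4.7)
The proof plan rests on a scale analysis followed by a two-scale asymptotic expansion, made rigorous by a min-max argument. First, I would justify why the announced scaling is forced on us. The perforation surface satisfies $|\Sigma_\ve|_{d-1}\sim\ve^{-1}|\Omega||\Sigma^0|/|Y|$, so inserting a bounded trial function into the Rayleigh quotient produces leading order $\ve^{-1}q(0)|\Sigma^0|/|Y|$; the value $q(0)$ appears because any good trial must concentrate where $q$ is smallest. If a candidate eigenfunction is localized at scale $\delta$ around $x=0$, the Dirichlet energy is of order $\delta^{-2}$ while the excess boundary energy $\int_{\Sigma_\ve}(q(x)-q(0))u^2\,d\sigma$ is of order $\delta^2/\ve$ (since $q(x)-q(0)\sim\tfrac12 x^T H(q)x$). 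Balancing these yields $\delta=\ve^{1/4}$ and a correction of order $\ve^{-1/2}$, which legitimizes the ansatz stated in Theorem~\ref{Th-short}.

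Guided by this, I would carry out a two-scale expansion in $y=x/\ve$ and $z=x/\ve^{1/4}$, postulating $u^\ve(x)\sim v_0(z)+\ve^{1/4}v_1(z,y)+\ve^{1/2}v_2(z,y)+\cdots$ and $\lambda^\ve\sim\ve^{-1}q(0)|\Sigma^0|/|Y|+\ve^{-1/2}\mu+\cdots$. Substituting in \eqref{or-prob} and matching powers of $\ve$ produces a hierarchy of cell problems on $Y$ of Neumann--flux type, with the boundary flux on $\Sigma^0$ generated by the Fourier condition linearized around $q(0)$. Solving them introduces the standard correctors $N_i(y)$ and identifies the homogenized matrix $a^\eff$. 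The Fredholm solvability condition at the order where $\mu$ first enters, obtained by averaging over the fast cell $Y$ and the fast surface $\Sigma^0$, reduces the problem for $v_0(z)$ to precisely \eqref{eff-prob-intro}, with $Q=\tfrac12(|\Sigma^0|/|Y|)H(q)$ emerging from the quadratic Taylor term of $q$ integrated against the surface measure on $\Sigma^0$.

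To turn this formal computation into a convergence proof, I would use the min-max characterization in both directions. \emph{Upper bound.} Take the first $j$ Hermite-type eigenfunctions $v_1,\dots,v_j$ of \eqref{eff-prob-intro}, multiply each by $1+\ve^{1/4}\sum_i N_i(x/\ve)\partial_{z_i}v_k$, cut off outside a small neighbourhood of $0$ (permitted by the Gaussian decay of $v_k$), and insert the result into the Rayleigh quotient; scaled trace estimates on $\Sigma_\ve$ convert the surface integrals into volume integrals up to $o(\ve^{-1/2})$, yielding $\limsup_\ve\mu_j^\ve\le\mu_j$. \emph{Lower bound.} Extract from the rescaled sequence $v_j^\ve(z)=u_j^\ve(\ve^{1/4}z)$ a subsequence weakly convergent in $L^2(\mathbb{R}^d)$, prove tightness so that no mass escapes to infinity, and pass to a two-scale limit in \eqref{weak-or-prob}. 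Preservation of $L^2$ orthogonality gives $j$ linearly independent limits and hence $\liminf_\ve\mu_j^\ve\ge\mu_j$; if $\mu_j$ is simple this upgrades the weak convergence to strong in $L^2(\mathbb{R}^d)$.

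The main technical obstacle is a uniform cell-wise Poincar\'e--trace inequality sharp enough to separate the harmonic-oscillator correction from the leading surface term. Schematically one needs
\[
\int_{\Sigma_\ve^i}q\,u^2\,d\sigma \;\ge\; \frac{q(0)|\Sigma^0|}{|Y|\ve}\int_{Y_\ve^i}u^2\,dx + \frac{|\Sigma^0|}{|Y|\sqrt{\ve}}\int_{Y_\ve^i}\bigl(\tfrac12 z^T H(q)z\bigr)u^2\,dx - C\int_{Y_\ve^i}|\nabla u|^2\,dx,
\]
uniformly in $i\in I_\ve$ and with the correct constants, so that the potential $z^T Q z$ emerges with exactly the stated coefficient. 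Proving this sharp form (as opposed to the softer Poincar\'e--trace inequality on $Y$, which only captures the leading order), together with controlling the tightness of $v_j^\ve$ at infinity through the quadratic growth of $q(x)-q(0)$ and the exponential decay of the limit eigenfunctions, is where most of the effort of the proof will lie.
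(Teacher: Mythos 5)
Your plan is conceptually sound and reaches the same destination as the paper—scale balance forcing $\delta=\ve^{1/4}$, two-scale expansion producing the harmonic oscillator, and a variational justification—but the justification step is genuinely different. The paper works with the rescaled operator $G^\ve$ on $\widetilde{\Omega}_\ve=\ve^{-1/4}\Omega_\ve$ and invokes Vishik--Lyusternik's lemma on almost eigenvalues (Lemma~\ref{lemma Vishik}): it builds the cut-off corrected ansatz $V_p^\ve$, checks it is an almost eigenfunction of $G^\ve$ to order $\ve^{1/4}$, and deduces simultaneously the existence of a nearby true eigenvalue, the count of eigenvalues via an almost-unitary matrix $\alpha^\ve$, and the rate $O(\ve^{1/4})$ that powers Theorem~\ref{Th-main}. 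Your min-max-upper-bound plus compactness-lower-bound route yields the qualitative convergence stated in Theorem~\ref{Th-short} and is somewhat more elementary, but to recover the eigenvalue count one would still need an analogue of the paper's Lemma~\ref{lemma-multiplicity}, and you would have to work considerably harder to extract the $\ve^{1/4}$ rate that Vishik's lemma gives essentially for free.

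Two small technical points: the corrector term in your trial function should carry the factor $\ve^{3/4}$, not $\ve^{1/4}$ (from $\ve\, N(x/\ve)\cdot\nabla_x v(x/\ve^{1/4})=\ve^{3/4}N(z/\ve^{3/4})\cdot\nabla_z v(z)$, cf.~\eqref{anzats-without-cut-off}); and the cell problem for $N_k$ is the standard homogenization Neumann problem \eqref{eq-N} with flux $-a_{ik}n_i$ on $\Sigma^0$, driven entirely by the oscillation of $a$, not by a linearization of the Fourier condition—the Fourier term contributes only to the $z^TQz$ potential via the trace inequality. You correctly identify that sharp trace--Poincar\'e inequality as the main technical ingredient; the paper proves it globally (Lemmata~\ref{lemma-1}, \ref{lemma-1-bis}) using a divergence-structure corrector $\chi$ on the cell, which is cleaner than the cell-by-cell version you sketch but delivers the same estimate.
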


\begin{remark}
\label{remark-loc-per}
Theorem~\ref{Th-short} can be generalized to the case of locally periodic coefficients in (\ref{or-prob}).

Namely, let us consider the following problem:
\begin{equation}
\label{loc-per}
\left\{
\begin{array}{lcr}
\displaystyle
- {\rm div}(a^\ve(x) \nabla u^\ve(x)) = \lambda^\ve u^\ve(x), \quad \hfill x \in \Omega_\ve,
\\[3mm]
a^\ve(x) \nabla u^\ve(x)\cdot n = - q^\ve(x)\,u^\ve(x), \quad \hfill x \in \Sigma_\ve,
\\[3mm]
u^\ve(x)=0, \quad \hfill x \in \partial \Omega
\end{array}
\right.
\end{equation}
with
$$
a^\ve(x) = a(x,x/\ve), \qquad q^\ve(x) = q(x,x/\ve).
$$
Assume that
\begin{itemize}
\item
$a_{ij}(x,y)$ and $q(x,y)$ are $Y$-periodic in $y$
functions such that
$a_{ij}(x,y)$, $q(x,y) \in C^{2,\alpha}(\mathbb{R}^d; C^\alpha({\overline{Y}}))$ with some $\alpha>0$.
\item
The matrix $a(x,y)$ satisfies the uniform elipticity condition.
\item
The local average of $q$ defined by
$$
\bar{q}(x) = \frac{1}{|\Sigma^0|_{d-1}} \, \int \limits_{\Sigma^0} q(x,y)\, d\sigma_y,
$$
admits its global minimum at $x=0$.
\item In the vicinity of $x=0$
$$
\bar{q}(x) = \bar{q}(0) + \frac{1}{2} \, x^T \,H(\bar{q})\,x + o(|x|^2)
$$
with the positive definite Hessian matrix $H(\bar{q})$.
\item $x=0$ is the only global minimum point of $\bar{q}$
in $\overline\Omega$.
\end{itemize}
Then the following convergence result holds.
\begin{theorem}
\label{Th-short-loc-per}
If $(\lambda_j^\ve, u_j^\ve)$ stands for the $j$th eigenpair of problem \eqref{loc-per}, then
for any $j$, the following representation takes place:
$$
\lambda_j^\ve = \frac{1}{\ve}\, \frac{|\Sigma^0|_{d-1}}{|Y|_d}\, \bar{q}(0) + \frac{\mu_j^\ve}{\sqrt{\ve}},
\quad u_j^\ve(x) = v_j^\ve \big(\frac{x}{\ve^{1/4}}\big),
$$
where $(\mu_j^\ve, v_j^\ve(z))$ are such that
\begin{itemize}
\item
$\mu_j^\ve$ converges, as $\ve \to 0$, to the $j$th eiegnvalue $\mu_j$ of the effective spectral problem
\be
\label{eff-prob-loc-per}
-{\rm div}(a^\eff \nabla v) + (z^T P z)\, v = \mu\, v, \quad
v \in L^2(\mathbb{R}^d),
\ee
where
$$
P= \frac{1}{2} \frac{|\Sigma^0|_{d-1}}{|Y|_d}\, H(\bar{q})
$$
and $a^\eff$ is a positive definite matrix defined by
$$
a_{ij}^\eff = \frac{1}{|Y|_d}\,\int \limits_Y a_{ik}(0,\zeta) (\delta_{kj} + \partial_k N_j(\zeta))\, d\zeta,
$$
with the functions $N_j$ solving auxiliary cell problems
$$
\left\{
\begin{array}{lcr}
- {\rm div}_{\zeta} (a(0,\zeta) \nabla_{\zeta} N_k(\zeta)) = {\rm div}_{\zeta} a_{\cdot k}(0, \zeta), \quad k = 1,...,d, \quad \zeta \in Y,
\\[2mm]
a(0,\zeta)\nabla_\zeta N_k \cdot n= - a_{i k}(0,\zeta) n_i, \quad \zeta \in \Sigma^0,
\\[2mm]
N_k(\zeta)\in H_{\#}^1(Y),
\end{array}
\right.
$$
\item
If $\mu_j$ is a simple eigenvalue, then, for small $\ve$, $\mu_j^\ve$ is also simple, and the convergence of the corresponding eigenfunctions (extended to the whole $\mathbb{R}^d$) holds
$$
\|v_j^\ve - v_j\|_{L^2(\mathbb{R}^d)} \to 0, \quad \ve \to 0.
$$
\end{itemize}
\end{theorem}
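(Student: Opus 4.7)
The plan is to mimic the analysis used for Theorem~\ref{Th-short}, introducing at the outset the same rescaling of the eigenvalue and of the spatial variable, namely
\[
\lambda^\ve = \frac{1}{\ve}\,\frac{|\Sigma^0|_{d-1}}{|Y|_d}\,\bar{q}(0) + \frac{\mu^\ve}{\sqrt{\ve}},\qquad
z = \frac{x}{\ve^{1/4}},
\]
and to perform a two-scale asymptotic expansion with the slow variable $z$ and the fast variable $y=x/\ve=z/\ve^{3/4}$. The formal ansatz is
\[
u^\ve(x) = v_0(z) + \ve^{1/4} v_1(z,y) + \ve^{1/2} v_2(z,y) + \ve^{3/4} v_3(z,y) + \cdots,
\]
with $v_k(z,\cdot)$ being $Y$-periodic for $k\ge1$. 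Plugging this into \eqref{loc-per} and using $\nabla_x = \ve^{-1/4}\nabla_z+\ve^{-1}\nabla_y$, one collects terms by powers of $\ve^{1/4}$. The balance of the $\ve^{-1}$-order in the interior with the surface term (which, after the change of variables on $\Sigma_\ve$, carries a factor $\ve^{-1}$) forces the leading surface contribution to reduce to the local average $\bar{q}$ evaluated at the minimum point, whence the factor $|\Sigma^0|_{d-1}|Y|_d^{-1}\bar{q}(0)$ in the decomposition of $\lambda^\ve$.

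At the next order one obtains the cell problem for $N_k$ stated in the theorem, with $a(x,y)$ frozen at $x=0$ because the localization scale $\ve^{1/4}$ ensures that only the value at the minimum point is seen at leading order; this also produces the effective matrix $a^\eff$. The oscillatory part of $q(x,y)$ generates additional correctors at the order $\ve^{1/4}$: one introduces a boundary corrector $N_0(\zeta)$ solving
\[
-\mathrm{div}_\zeta(a(0,\zeta)\nabla_\zeta N_0) = 0\ \text{in}\ Y,\qquad a(0,\zeta)\nabla_\zeta N_0\cdot n = \bar{q}(0) - q(0,\zeta)\ \text{on}\ \Sigma^0,
\]
which absorbs the zero-mean oscillation of $q(0,\cdot)$ on $\Sigma^0$. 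Collecting the compatibility (solvability) condition at the order $\ve^{-1/2}$ on the slow scale then yields, after Taylor-expanding $\bar{q}$ near the origin, the harmonic oscillator equation \eqref{eff-prob-loc-per} with the quadratic potential $z^TPz$ built from $H(\bar{q})$.

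Rigorous justification then follows the same two movements as for Theorem~\ref{Th-short}. First, upper bounds for $\lambda_j^\ve$ are obtained by a min-max argument using as test functions suitably cut-off and truncated versions of the formal expansion; here the delicate point is to show that the contribution of the $y$-oscillations of $q^\ve$ on $\Sigma_\ve$ is absorbed by the boundary corrector $N_0$ to within $o(\ve^{-1/2})$, which is accomplished by a scaled trace inequality on $\Omega_\ve$ combined with the smoothness $q\in C^{2,\alpha}(\mathbb R^d;C^\alpha(\overline Y))$. Second, matching lower bounds are established by working in the rescaled variable $z$: setting $v_j^\ve(z) = u_j^\ve(\ve^{1/4}z)$, the Poincar\'e and trace inequalities on the perforated domain give uniform $H^1$-type bounds for $v_j^\ve$ on any compact set of $\mathbb R^d$, together with uniform Gaussian-type decay coming from the quadratic growth of $\bar{q}-\bar{q}(0)$ near $0$. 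Two-scale compactness (with respect to the fast variable $y$, with the slow variable $z\in\mathbb R^d$) then lets one pass to the limit and identify the limit as an eigenfunction of \eqref{eff-prob-loc-per}.

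The principal technical obstacle, compared with Theorem~\ref{Th-short}, is the joint $(x,y)$-dependence of the surface coefficient $q^\ve$. The two difficulties it creates are: (i) obtaining sharp control of the boundary integral $\int_{\Sigma_\ve} q^\ve u^\ve v\,d\sigma$ in terms of the averaged coefficient $\bar q$ with an error of lower order than $\ve^{-1/2}$, which requires the boundary corrector $N_0$ together with quantitative unfolding-type estimates on $\Sigma_\ve$; and (ii) ensuring that the effective cell problem can legitimately be frozen at $x=0$, which is justified by the $\ve^{1/4}$-localization of the eigenfunctions proved via the Agmon-type exponential bounds following from the positivity of $H(\bar q)$. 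Once these two points are in place, the rest of the argument—global convergence of eigenvalues, simplicity transfer, and $L^2(\mathbb R^d)$-convergence of simple eigenfunctions—runs in parallel with the constant-coefficient case.
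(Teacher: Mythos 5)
The paper gives no stand-alone proof of Theorem~\ref{Th-short-loc-per}: it only asserts, in Remark~\ref{remark-loc-per} and in the introduction, that ``the techniques developed in the paper also apply'' and records the (updated) cell problems and the quantity $\bar q(0)$. Your sketch supplies what the paper leaves implicit and is, in substance, the intended argument: the rescaling $z=x/\ve^{1/4}$, the subtraction of $\ve^{-1}|\Sigma^0|_{d-1}|Y|_d^{-1}\bar q(0)$, the two-scale expansion with cell problems frozen at $x=0$, the harmonic-oscillator limit equation with $P=\tfrac12\frac{|\Sigma^0|_{d-1}}{|Y|_d}H(\bar q)$, and then a min-max upper bound matched by a compactness/two-scale lower bound. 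So the approach is essentially the same as the paper's proof of Theorem~\ref{Th-short}, transposed to the locally periodic case.

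Two points where you depart from the paper are worth flagging, both legitimate. First, you introduce a boundary corrector $N_0$ with $a(0,\cdot)\nabla_\zeta N_0\cdot n=\bar q(0)-q(0,\zeta)$ on $\Sigma^0$ to absorb the zero-mean oscillation of $q(0,\cdot)$. The paper handles exactly this oscillation \emph{inside} Lemma~\ref{lemma-1}/Lemma~\ref{lemma-1-bis}: the auxiliary divergence-free vector $\chi$ is chosen so that $\chi\cdot n=-q$ on $\Sigma^0$, and in the locally periodic case one simply lets $\chi=\chi(x,y)$ solve the same periodic problem with $x$ as a parameter; the extra term $\ve\,\mathrm{div}_x\chi$ produced by the chain rule contributes $O(\|\psi\|_{L^2}^2)$, which is of the same size as the existing error $\|\psi\|_{L^2}\|\nabla\psi\|_{L^2}$. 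Either route works; the corrector $N_0$ you build is consistent with that $\chi$ and appears in the ansatz only at order $\ve$, with $H^1$ size $O(\ve^{1/4})$ -- so saying the corrector enters ``at the order $\ve^{1/4}$'' is a mild imprecision (the $\ve^{1/4}$ refers to the order at which the oscillation shows up in the boundary condition, or to the $H^1$-size of the corrector term, not to its coefficient in the expansion). Second, you invoke Agmon-type exponential estimates for the $\ve$-dependent eigenfunctions; the paper instead proves only concentration (Lemma~\ref{lemma-concentr}) and establishes the compactness it needs in the limit variable via Lemma~\ref{lemma-compact} together with the Gaussian decay of the harmonic-oscillator eigenfunctions~\eqref{harm-oscil-v}. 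Agmon bounds would certainly suffice, but they are stronger than what the paper actually uses for the mere $L^2$-convergence stated in Theorem~\ref{Th-short-loc-per}. Finally, note that the full justification in the paper is via Vishik's lemma on almost-eigenpairs (Lemma~\ref{lemma Vishik}), which is what controls multiplicities and gives rates; your sketch organizes the justification as upper bound by min-max plus lower bound by compactness, which is correct for the convergence claims of Theorem~\ref{Th-short-loc-per}, but you should be explicit about the multiplicity bookkeeping (as in Lemmas~\ref{lemma-matrix-alpha^eps}--\ref{lemma-multiplicity}) if you want the eigenvalue labelling $\mu_j^\ve\to\mu_j$ rather than only convergence of accumulation points.
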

\end{remark}
\section{Proof of Theorem~\ref{Th-short}}
\subsection{Preliminaries. Estimates for $\lambda_1^\ve$}
In this section we estimate the first eigenvalue $\lambda_1^\ve$ of problem \eqref{or-prob}.
To this end we use the variational representation for $\lambda_1^\ve$. Let us recall that,
due to the classical min-max principle (see, for example, \cite{Mikh-PDE}),
\begin{equation}
\label{var-lambda_1^eps}
\lambda_1^\ve = \inf \limits_{v\in H_0^1(\Omega_\ve, \partial \Omega)}
\,\, \frac{\int \limits_{\Omega_\ve} a^\ve \nabla v\cdot \nabla v\, dx +
\int \limits_{\Sigma_\ve} q\, (v)^2\, d\sigma}{\|v\|_{L^2(\Omega_\ve)}^2}.
\end{equation}
\begin{lemma}
\label{lemma-est-lambda^eps}
The first eigenvalue of the spectral problem \eqref{or-prob} satisfies the estimate
$$
\frac{1}{\ve}\, \frac{|\Sigma^0|_{d-1}}{|Y|_d}\, q(0) + O(1)\,
\le\, \lambda_1^\ve \,
\le \, \frac{1}{\ve}\, \frac{|\Sigma^0|_{d-1}}{|Y|_d}\, q(0) + O(\ve^{-1/2}), \quad \ve \to 0.
$$
\end{lemma}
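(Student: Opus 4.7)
Both estimates will be obtained directly from the Rayleigh-quotient characterization (\ref{var-lambda_1^eps}). For the upper bound, I would insert a test function concentrated on the natural scale $\ve^{1/4}$ around the minimum point. Specifically, fix $\phi\in C_c^\infty(\mathbb{R}^d)$ nonzero and set $v_\ve(x)=\phi(x/\ve^{1/4})$; since $0$ is interior to $\Omega$ and $\phi$ has compact support, $v_\ve\in H_0^1(\Omega_\ve,\partial\Omega)$ for all sufficiently small $\ve$. The three pieces of the Rayleigh quotient can be evaluated by Riemann-sum type approximations: the bulk integrals satisfy $\int_{\Omega_\ve}v_\ve^2\,dx\approx|Y|_d\,\ve^{d/4}\|\phi\|_{L^2}^2$ and $\int_{\Omega_\ve}a^\ve|\nabla v_\ve|^2\,dx=O(\ve^{-1/2+d/4})$, while the boundary term satisfies $\int_{\Sigma_\ve}q\,v_\ve^2\,d\sigma\approx\frac{|\Sigma^0|_{d-1}}{\ve}\int_\Omega q(x)\,\phi(x/\ve^{1/4})^2\,dx$. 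Rescaling $z=x/\ve^{1/4}$ in the last integral and applying the Taylor expansion $q(x)=q(0)+\tfrac12 x^T H(q)x+o(|x|^2)$ from (H4), the quadratic correction contributes $O(\ve^{1/2}/\ve)=O(\ve^{-1/2})$ to the Rayleigh quotient while the gradient contribution is likewise $O(\ve^{-1/2})$, giving $\lambda_1^\ve\le q(0)|\Sigma^0|_{d-1}/(\ve|Y|_d)+O(\ve^{-1/2})$. The exponent $1/4$ is precisely the choice that balances these two $\ve^{-1/2}$ terms against each other.

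For the lower bound, the key ingredient is a scaled trace--Poincar\'e identity on the cell: for every $w\in H^1(Y)$,
\[
\Big|\int_{\Sigma^0}w^2\,d\sigma - \frac{|\Sigma^0|_{d-1}}{|Y|_d}\int_Y w^2\,dy\Big| \le C\int_Y |w|\,|\nabla w|\,dy,
\]
which follows by rewriting the left-hand side as $\int_{\Sigma^0}(w^2-\overline{w^2})\,d\sigma$, with $\overline{w^2}$ the mean of $w^2$ on $Y$, and applying a standard trace inequality together with Poincar\'e to $w^2\in W^{1,1}(Y)$. Rescaling this to each cell $Y_\ve^i$ (both sides pick up a factor $\ve^{d-1}$ once $\nabla_y=\ve\nabla_x$ is taken into account) and summing over $i\in I_\ve$ yields, for each $v\in H_0^1(\Omega_\ve,\partial\Omega)$,
\[
\int_{\Sigma_\ve}v^2\,d\sigma \ge \frac{|\Sigma^0|_{d-1}}{\ve|Y|_d}\int_{\Omega_\ve}v^2\,dx - C\int_{\Omega_\ve}|v|\,|\nabla v|\,dx.
\]
Using $q(x)\ge q(0)$ from (H4), Young's inequality $|v||\nabla v|\le\tfrac{\alpha}{2}|\nabla v|^2+\tfrac{1}{2\alpha}v^2$, and choosing $\alpha$ small enough so that the gradient contribution is absorbed into $\int a^\ve|\nabla v|^2\ge\Lambda\int|\nabla v|^2$ from (H1), one concludes
\[
\int_{\Omega_\ve}a^\ve|\nabla v|^2\,dx+\int_{\Sigma_\ve}q\,v^2\,d\sigma \ge \Big(\frac{q(0)|\Sigma^0|_{d-1}}{\ve|Y|_d}-C'\Big)\|v\|_{L^2(\Omega_\ve)}^2,
\]
and taking the infimum over $v$ gives the lower bound with an $O(1)$ correction.

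The main technical point is the scaled trace--Poincar\'e inequality and the uniformity of its constant after rescaling and summing over the $O(\ve^{-d})$ cells; the boundary cells for which $Y_\ve^i\not\subset\Omega$ contain no perforation by construction and contribute nothing to the surface integral, so they cause no difficulty. Everything else reduces to a routine application of the min--max principle once the correct test function (upper bound) and the one-sided scaled trace inequality (lower bound) have been identified.
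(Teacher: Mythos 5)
Your upper-bound argument is essentially the paper's: both plug a test function supported on the scale $\ve^{1/4}$ around $x=0$ into the Rayleigh quotient \eqref{var-lambda_1^eps} and use (H4) plus a scaled trace estimate to balance the two $O(\ve^{-1/2})$ corrections (the paper presents this as an optimization over the exponent $\alpha$, you go directly to $\alpha=1/4$). For the lower bound, however, you take a genuinely different route. The paper replaces $q$ by $q(0)$ and then simply cites Pastukhova \cite{Pastukh-01} for the asymptotics $\nu_1^\ve=\ve^{-1}\frac{|\Sigma^0|_{d-1}}{|Y|_d}q(0)+O(1)$ of the first eigenvalue of the resulting constant-coefficient Fourier problem. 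Your argument is self-contained: you establish a cell-level trace--Poincar\'e estimate, rescale it, sum over cells, and absorb the error $\int|v||\nabla v|$ via Young's inequality into the gradient energy using (H1). The two-sided surface/bulk inequality you derive is in fact the paper's own Lemma~\ref{lemma-1}, which the authors prove differently (by constructing a periodic vector field $\chi$ with $-\operatorname{div}\chi=|\Sigma^0|_{d-1}/|Y|_d$ and $\chi\cdot n=-1$ on $\Sigma^0$, then integrating by parts over $\Omega_\ve$), and which they apply only to the upper bound. Your route buys independence from the external reference; the paper's citation buys brevity.

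One step you declare harmless deserves an actual argument. After summing the rescaled cell inequality over $i\in I_\ve$, the volume integral on the right-hand side is over $\bigcup_{i\in I_\ve}(Y_\ve^i\cap\ve E)$, not over $\Omega_\ve$; the difference is the unperforated boundary layer $\Omega\setminus\bigcup_{i\in I_\ve}Y_\ve^i$. Promoting $\int_{\bigcup Y_\ve^i}v^2$ to $\int_{\Omega_\ve}v^2$ in your displayed inequality introduces the extra term $\ve^{-1}\frac{|\Sigma^0|_{d-1}}{|Y|_d}\int_{\mathrm{layer}}v^2\,dx$ on the wrong side, and this is not a priori $O(1)\|v\|_{L^2(\Omega_\ve)}^2$. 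It is controlled, but only because that layer lies within $O(\ve)$ of $\partial\Omega$, where $v$ vanishes, so a boundary-layer Poincar\'e inequality gives $\int_{\mathrm{layer}}v^2\le C\ve^2\int_{\Omega_\ve}|\nabla v|^2$; multiplied by $\ve^{-1}$ this is $O(\ve)\int|\nabla v|^2$ and can be absorbed by the ellipticity term for small $\ve$. This should be stated rather than dismissed, since it is the only place where the Dirichlet condition on $\partial\Omega$ enters the lower bound.
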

\begin{proof}
We start by proving the estimate from below. By \eqref{var-lambda_1^eps},
$$
\lambda_1^\ve
\ge \inf \limits_{{v\in H_0^1(\Omega_\ve, \partial \Omega)}\atop{\|v\|_{L^2(\Omega_\ve)}=1}}
\,\, \Big\{\int \limits_{\Omega_\ve} a^\ve \nabla v \cdot \nabla v\, dx +
q(0)\, \int \limits_{\Sigma_\ve} (v)^2\, d\sigma\Big\}.
$$
The last infimum is attained on the first eigenfunction of the following spectral problem
$$
\left\{
\begin{array}{lcr}
\displaystyle
- {\rm div}(a^\ve(x) \nabla w^\ve(x)) = \nu^\ve w^\ve(x), \quad \hfill x \in \Omega_\ve,
\\[3mm]
a^\ve(x) \nabla w^\ve(x)\cdot n = - q(0)w^\ve(x), \quad \hfill x \in \Sigma_\ve,
\\[3mm]
w^\ve(x)=0, \quad \hfill x \in \partial \Omega.
\end{array}
\right.
$$
It has been proven in \cite{Pastukh-01} that the first eigenvalue of this problem admits the following asymptotics:
$$
\nu_1^\ve = \frac{1}{\ve}\, \frac{|\Sigma^0|_{d-1}}{|Y|_d}\, q(0) + O(1), \qquad \ve \to 0.
$$
Thus,
$$
\lambda_1^\ve \ge \frac{1}{\ve}\, \frac{|\Sigma^0|_{d-1}}{|Y|_d}\, q(0) + O(1), \qquad \ve \to 0.
$$

We proceed to the derivation of the upper bound for $\lambda_1^\ve$. Choosing $v\in C_0^\infty(\Omega)$ as a test function in \eqref{var-lambda_1^eps}, one can obtain a rough estimate
\begin{equation}
\label{rough-est-lambda_1^eps}
\lambda_1^\ve \le \tilde{C}\, \ve^{-1},
\end{equation}
with a constant $\tilde{C}$ independent of $\ve$. To specify $\tilde{C}$ one should choose a "smarter" test function. Let us take $v\in C_0^\infty(\mathbb{R}^d)$, $\|v\|_{L^2(\mathbb{R}^d)}=1$, and choose $v(x/\ve^\alpha)$ as a test function in \eqref{var-lambda_1^eps}, $0<\alpha<1/2$. Note that if ${\rm supp}\, v \subset B_R(0)$, for some $R>0$, then ${\rm supp}\, v(x/\ve^\alpha) \subset B_{\ve^\alpha R}(0)$. Then we obtain
$$
\lambda_1^\ve \le
\frac{\int \limits_{\Sigma_\ve}q(x)\, \big|v\big(\frac{x}{\ve^\alpha}\big)\big|^2\, d\sigma + O(\ve^{-2\alpha}\, \ve^{d \alpha})}{\int \limits_{\Omega_\ve}\big|v\big(\frac{x}{\ve^\alpha}\big)\big|^2\, dx}.
$$
Taking into account assumption $\bf(H4)$ and using Lemma~\ref{lemma-1}, one has
$$
\lambda_1^\ve \le
\frac{\frac{1}{\ve}\frac{|\Sigma^0|_{d-1}}{|Y|_d} \,q(0)\, \int \limits_{\Omega_\ve}\big|v\big(\frac{x}{\ve^\alpha}\big)\big|^2\, dx +O(\ve^{2\alpha-1}\, \ve^{d \alpha})+  O(\ve^{-2\alpha}\, \ve^{d \alpha})}{\int \limits_{\Omega_\ve}\big|v\big(\frac{x}{\ve^\alpha}\big)\big|^2\, dx}.
$$
Notice that the best estimate is obtained for $\alpha = 1/4$. Finally,
\begin{equation}
\label{3}
\lambda_1^\ve \le \frac{1}{\ve}\,\frac{|\Sigma^0|_{d-1}}{|Y|_d} \, q(0) + O(\ve^{-1/2}), \quad \ve \to 0.
\end{equation}
\end{proof}

\begin{remark}
When deriving the upper bound for $\lambda_1^\ve$, we used a test function which is concentrated at $x=0$. Namely, the test function of the form $v(\ve^{-1/4}x)$. This observation turns out  very helpful for the construction of the asymptotics of eigenpairs $(\lambda^\ve, u^\ve)$.
\end{remark}

The next definition explains the notion of concentration.

\begin{definition}\label{def-concentr}
We say that a family $\{w_\ve(x)\}_{\ve>0}$ with $0<c_1\le\|w_\ve\|_{L^2(\Omega_\ve)}\le c_2$ is concentrated at $x_0$, as $\ve\to0$, if for any $\gamma>0$ there is $\ve_0>0$
such that
$$
\int \limits_{\Omega_\ve \setminus B_\gamma(x_0)} |w_\ve|^2\, dx < \gamma, \qquad\hbox{for all }\ve \in (0,\ve_0).
$$
Here $B_\gamma(x_0)$ is a ball of radius $\gamma$ centered at $x_0$.
\end{definition}
\begin{lemma}
\label{lemma-concentr}
The first eigenfunction $u_1^\ve$ of problem \eqref{or-prob} is concentrated in the sense of Definition~\ref{def-concentr} at the minimum point of $q(x)$, that is at $x=0$.
\end{lemma}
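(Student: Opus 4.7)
The plan is to combine the Rayleigh quotient characterization of $\lambda_1^\ve$ with the two-sided bound of Lemma~\ref{lemma-est-lambda^eps} to show that the boundary energy of $u_1^\ve$ concentrated far from the minimum is too expensive. Normalize $\|u_1^\ve\|_{L^2(\Omega_\ve)}=1$, so
\[
\lambda_1^\ve = \int_{\Omega_\ve} a^\ve \nabla u_1^\ve\cdot\nabla u_1^\ve\,dx + \int_{\Sigma_\ve} q(x)\,|u_1^\ve|^2\,d\sigma.
\]
Writing $q(x)=q(0)+(q(x)-q(0))$ and applying the variational principle to the ``frozen'' bilinear form with the constant coefficient $q(0)$ (whose principal eigenvalue $\nu_1^\ve = \ve^{-1}|\Sigma^0|_{d-1}|Y|_d^{-1} q(0)+O(1)$ appeared in the lower-bound part of Lemma~\ref{lemma-est-lambda^eps}) yields
\[
\int_{\Omega_\ve} a^\ve |\nabla u_1^\ve|^2\,dx + q(0)\int_{\Sigma_\ve}|u_1^\ve|^2\,d\sigma \ge \nu_1^\ve.
\]
Combining this lower bound with the upper bound $\lambda_1^\ve \le \ve^{-1}|\Sigma^0|_{d-1}|Y|_d^{-1} q(0)+O(\ve^{-1/2})$ gives the key estimate
\[
\int_{\Sigma_\ve}\bigl(q(x)-q(0)\bigr)|u_1^\ve|^2\,d\sigma \le C\ve^{-1/2}.
\]

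For the concentration step, fix $\gamma>0$. By \textbf{(H3)}--\textbf{(H4)} and the uniqueness of the minimum, there exists $\theta_\gamma>0$ with $q(x)-q(0)\ge \theta_\gamma$ on $\overline\Omega\setminus B_\gamma(0)$, whence
\[
\int_{\Sigma_\ve\setminus B_\gamma(0)}|u_1^\ve|^2\,d\sigma \le \frac{C}{\theta_\gamma}\,\ve^{-1/2}.
\]
To transfer this surface estimate to a bulk estimate, I would invoke the scaled trace inequality on a perforated subdomain (the inequality already used as Lemma~\ref{lemma-1} in the proof of Lemma~\ref{lemma-est-lambda^eps}), which in the cell-wise form $|\Sigma_\ve^i|$ vs.\ $|Y_\ve^i|$ reads
\[
\frac{|\Sigma^0|_{d-1}}{\ve\,|Y|_d}\int_{\Omega_\ve\setminus B_\gamma(0)}|v|^2\,dx \le \int_{\Sigma_\ve\setminus B_\gamma(0)}|v|^2\,d\sigma + C\,\ve\int_{\Omega_\ve\setminus B_\gamma(0)}|\nabla v|^2\,dx.
\]
Plugging in $v=u_1^\ve$, and using the obvious bound $\int_{\Omega_\ve}|\nabla u_1^\ve|^2\,dx \le \Lambda^{-1}\lambda_1^\ve = O(\ve^{-1})$ coming from the Rayleigh quotient together with ellipticity \textbf{(H1)}, gives
\[
\int_{\Omega_\ve\setminus B_\gamma(0)}|u_1^\ve|^2\,dx \le C_\gamma\bigl(\ve^{1/2}+\ve^{2}\cdot\ve^{-1}\bigr) = O(\ve^{1/2}),
\]
which goes to zero faster than $\gamma$, verifying Definition~\ref{def-concentr} at $x_0=0$.

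The point I expect to require the most care is the scaled trace inequality on the irregular subdomain $\Omega_\ve\setminus B_\gamma(0)$: the set of periodicity cells $Y_\ve^i$ that lie entirely in $\Omega\setminus B_\gamma(0)$ is not exactly $\Omega_\ve\setminus B_\gamma(0)$, and a thin layer of partial cells must be handled separately. I would deal with this by slightly shrinking the ball (replacing $\gamma$ by $\gamma/2$ in the set where I impose the trace inequality, and using $q(x)-q(0)\ge\theta_{\gamma/2}>0$ on the complement), so that the relevant region is a union of whole cells up to a negligible error of order $\ve$. The rest of the argument is purely variational and uses only the assumptions \textbf{(H1)}--\textbf{(H4)} together with Lemma~\ref{lemma-est-lambda^eps}.
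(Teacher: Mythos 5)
Your proposal is correct, and it takes a genuinely different (though closely related) route from the paper. The paper argues by contradiction: assuming the $L^2$ mass outside $B_\gamma(0)$ stays $\ge\gamma$ along a subsequence, it first converts the surface term $\int_{\Sigma_\ve}q\,|u_1^\ve|^2\,d\sigma$ into a bulk term $\frac{1}{\ve}\frac{|\Sigma^0|_{d-1}}{|Y|_d}\int_{\Omega_\ve}q\,|u_1^\ve|^2\,dx + O(\ve^{-1/2})$ via Lemma~\ref{lemma-1}, then splits the bulk integral over $B_\gamma(0)$ and its complement and deduces a lower bound for $\lambda_1^\ve$ that exceeds the upper bound of Lemma~\ref{lemma-est-lambda^eps}. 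You instead subtract the frozen Rayleigh quotient with coefficient $q(0)$ to isolate the weighted surface integral $\int_{\Sigma_\ve}(q-q(0))|u_1^\ve|^2\,d\sigma\le C\ve^{-1/2}$, localize this via the uniform positivity of $q-q(0)$ away from $0$, and only then transfer from surface to bulk with a local trace/Poincar\'e inequality. Both arguments rest on the same ingredients (the variational principle, the two-sided bound of Lemma~\ref{lemma-est-lambda^eps}, and scaled trace estimates), but yours is direct rather than a contradiction and produces an explicit rate $O(\ve^{1/2})$ for the mass outside any fixed ball, which the paper does not state even though it could be extracted from inequality \eqref{2}.

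Two minor points, neither of which is a gap. First, the constant $\frac{|\Sigma^0|_{d-1}}{|Y|_d}$ on the left of your local trace inequality is not needed and does not automatically come out of the cell-wise scaled Poincar\'e inequality with surface average; a generic positive constant $c>0$ suffices, since you only need the order in $\ve$. Alternatively, one can apply the paper's Lemma~\ref{lemma-1} directly to $\phi\, u_1^\ve$ with a fixed smooth cutoff $\phi$ supported away from $B_{\gamma/2}(0)$ and equal to $1$ outside $B_\gamma(0)$, which yields the same conclusion without worrying about partial cells; this is slightly cleaner than shrinking $\gamma$ cell by cell. Second, when you write $\lambda_1^\ve-\nu_1^\ve\le O(\ve^{-1/2})$ you are implicitly using that the $O(1)$ term in the asymptotics of $\nu_1^\ve$ is merely bounded (possibly negative), not necessarily nonnegative; this is still consistent with the $O(\ve^{-1/2})$ bound, so the step is fine, but it is worth being explicit.
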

\begin{proof}
Assume that $u_1^\ve$, normalized by $\|u_1^\ve\|_{L^2(\Omega_\ve)}=1$, is not concentrated at $x=0$. Then, there exists $\gamma >0$ such that, for any $\ve_0$,  we have
\begin{equation}
\label{1}
\int \limits_{\Omega_\ve \setminus B_\gamma(0)} |u_1^\ve|^2\, dx > \gamma
\end{equation}
for some $\ve < \ve_0$.

Estimate \eqref{rough-est-lambda_1^eps} together with \eqref{var-lambda_1^eps} imply the estimate
$$
\int \limits_{\Omega_\ve} |\nabla u_1^\ve|^2\, dx \le C\, \ve^{-1}.
$$
Then, using Lemma~\ref{lemma-1}, we obtain
$$
\begin{array}{l}
\displaystyle
\lambda_1^\ve =
\int \limits_{\Omega_\ve} a^\ve \nabla u_1^\ve\cdot \nabla u_1^\ve\, dx + \frac{1}{\ve}\,\frac{|\Sigma^0|_{d-1}}{|Y|_d} \int \limits_{\Omega_\ve} q\, |u_1^\ve|^2\, dx + O(\ve^{-1/2})
\cr\cr
\displaystyle
\ge \frac{1}{\ve}\,\frac{|\Sigma^0|_{d-1}}{|Y|_d} \min \limits_{\Omega_\ve \setminus B_\gamma(0)} q \, \int \limits_{\Omega_\ve \setminus B_\gamma(0)} |u_1^\ve|^2\, dx
\cr\cr
\displaystyle
+\frac{1}{\ve}\,\frac{|\Sigma^0|_{d-1}}{|Y|_d} \Big\{ q(0)\int \limits_{\Omega_\ve \cap B_\gamma(0)} |u_1^\ve|^2\, dx
+\int \limits_{\Omega_\ve \cap B_\gamma(0)}(q(x) - q(0)) |u_1^\ve|^2\, dx\Big\} +  O(\ve^{-1/2})
\end{array}
$$
Since $x=0$ is the global minimum point of $q(x)$, then
$$
\begin{array}{l}
\displaystyle
\lambda_1^\ve \ge
\frac{1}{\ve}\,\frac{|\Sigma^0|_{d-1}}{|Y|_d} \,\Big\{ \min \limits_{\Omega_\ve \setminus B_\gamma(0)} q \,
\int \limits_{\Omega_\ve \setminus B_\gamma(0)} |u_1^\ve|^2\, dx
+ q(0)\int \limits_{\Omega_\ve \cap B_\gamma(0)} |u_1^\ve|^2\, dx\Big\}
+  O(\ve^{-1/2}).
\end{array}
$$
By \eqref{1},
\begin{equation}
\label{2}
\lambda_1^\ve \ge \frac{1}{\ve}\,\frac{|\Sigma^0|_{d-1}}{|Y|_d} \, q(0)
+ \frac{1}{\ve}\,\frac{|\Sigma^0|_{d-1}}{|Y|_d} \, \Big(\min \limits_{\Omega_\ve \setminus B_\gamma(0)} q \, - q(0)\Big)\,\gamma
+  O(\ve^{-1/2}),
\end{equation}
that contradicts \eqref{3}. Lemma is proved.
\end{proof}

\begin{remark}
The min-max principle allows us to compare the eigenvalues of Dirichlet, Neumann and Fourier spectral problems. Namely, denote by $\lambda_{D, k}^\ve$ the $k$th eigenvalue of the Dirichlet problem ($u^\ve =0$ on $\Sigma_\ve$), and by $\lambda_{N, k}^\ve$ the $k$th eiegnvalue of the Neumann problem (the case $q=0$ in \eqref{or-prob}). Then, one can see that
\begin{equation}
\label{inequal-eigenvalues}
\lambda_{N, k}^\ve \le \lambda_k^\ve \le \lambda_{D, k}^\ve, \quad k = 1,2, \cdots.
\end{equation}
It is well-known (see \cite{Van-81}) that $\lambda_{N, k}^\ve = O(1)$ and $\lambda_{D, k}^\ve=O(\ve^{-2})$, $\ve \to 0$. Lemma~\ref{lemma-est-lambda^eps} specifies estimate \eqref{inequal-eigenvalues} for the first eigenvalue $\lambda_1^\ve$.
\end{remark}

\subsection{Change of unknowns. Rescaled problem}
For brevity, we denote
$$
\varkappa(x) = \frac{|\Sigma^0|_{d-1}}{|Y|_d}\, q(x), \quad
Q = \frac{1}{2}\, \frac{|\Sigma^0|_{d-1}}{|Y|_d}\,H(q),
$$
where $H(q)$ is the Hessian matrix of $q$ at $x=0$.

Note that Lemma~\ref{lemma-est-lambda^eps} suggests to study the asymptotics of $(\lambda_k^\ve - \ve^{-1}\,\varkappa(0))$, rather than of $\lambda_k^\ve$ itself. On the other hand, when deriving the upper bound in Lemma~\ref{lemma-est-lambda^eps}, we used the test function $v(x/\ve^{1/4})$, which allowed us to get the "optimal" estimate. Bearing in mind these two ideas, we first subtract $\ve^{-1}\,\varkappa(0)\,u^\ve(x)$ from both sides of the equation in \eqref{or-prob}, and then make the change of variables $z = \ve^{-1/4} x$ in \eqref{or-prob}. Then, the rescaled problem is stated in the domain
$$
\widetilde{\Omega_\ve} = \ve^{-1/4}\, \Omega_\ve,\qquad
\widetilde{\Sigma_\ve}=\ve^{-1/4}\, \Sigma_\ve,
$$
and takes the form
\begin{equation}
\label{rescaled-prob}
\left\{
\begin{array}{lcr}
\displaystyle
- {\rm div}(a^\ve(z) \nabla v^\ve(z)) -\frac{\varkappa(0)}{\sqrt{\ve}}\, v^\ve = \mu^\ve \, v^\ve(x), \qquad \hfill z \in \widetilde{\Omega_\ve},
\\[3mm]
a^\ve(z) \nabla v^\ve(z)\cdot n = - \ve^{1/4}\,q(\ve^{1/4} z)\, v^\ve(z), \qquad \hfill z \in \widetilde{\Sigma_\ve},
\\[3mm]
v^\ve(z)=0, \qquad \hfill z \in \ve^{-1/4}\partial \Omega.
\end{array}
\right.
\end{equation}
Here
\begin{equation}
\label{change-var}
v^\ve(z) = u^\ve(\ve^{1/4} z), \quad
a^\ve(z) = a\big(\frac{z}{\ve^{3/4}}\big), \quad
\mu^\ve = \sqrt{\ve}\, \big(\lambda^\ve - \frac{\varkappa(0)}{\ve}\big).
\end{equation}
The weak formulation of problem \eqref{prop-spectr-rescaled} reads: find $(\mu^\ve, v^\ve) \in \mathbb{R}\times H_0^1(\widetilde{\Omega_\ve}, \ve^{-1/4} \partial \Omega)$, $v^\ve \neq 0$, such that
\begin{equation}
\label{weak-rescal-prob}
W^\ve(v^\ve, w) = \mu^\ve (v^\ve, w)_{L^2(\widetilde{\Omega_\ve})}, \quad \forall w \in H_0^1(\widetilde{\Omega_\ve}, \ve^{-1/4} \partial \Omega).
\end{equation}
Here the bilinear form $W^\ve(u,v)$ is given by
\begin{equation}
\label{bilin-W^eps}
W^\ve(u,v)=
\int \limits_{\widetilde{\Omega_\ve}} a^\ve \nabla u \cdot \nabla v\, dz
- \frac{\varkappa(0)}{\sqrt{\ve}}\, \int \limits_{\widetilde{\Omega_\ve}} u\, v\, dz
+ \ve^{1/4}\, \int \limits_{\widetilde{\Sigma_\ve}} q(\ve^{1/4}z)\, u\, v\, d\sigma_z.
\end{equation}
\begin{remark}
\label{remark-extension}
\textbf{About the extension operator}
For all sufficiently small $\ve$, there exists an extension operator
$$
P^\ve: H_0^1(\widetilde{\Omega}_\ve, \ve^{-1/4}\partial \Omega) \to H_0^1(\ve^{-1/4} \Omega)
$$
such that
$$
\|P^\ve v\|_{L^2(\ve^{-1/4} \Omega)} \le C\, \|v\|_{L^2(\widetilde{\Omega_\ve})},\qquad
\|\nabla(P^\ve v)\|_{L^2(\ve^{-1/4} \Omega)} \le C\, \|\nabla v\|_{L^2(\widetilde{\Omega_\ve})},
$$
where $C$ is a constant independent of $\ve$.

Moreover, the obtained extended function (for which we keep the same notation) can be extended by zero to the whole $\mathbb{R}^d$, outward the boundary $\ve^{-1/4}\partial \Omega$.
\end{remark}

\begin{proposition}
\label{prop-spectr-rescaled}
The spectrum of problem \eqref{weak-rescal-prob} is real, discrete and consists of a countable set of points
$$
0 < \mu_1^\ve < \mu_2^\ve \le \cdots \le \mu_j^\ve \le \cdots \to +\infty.
$$
The corresponding eigenfunctions can be normalized by
\begin{equation}
\label{norm-cond-v^eps}
W^\ve(v_i^\ve, v_j^\ve) = \delta_{ij},
\end{equation}
with $W^\ve(u,v)$ defined by \eqref{bilin-W^eps}.
\end{proposition}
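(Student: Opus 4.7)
The plan is to reduce Proposition~\ref{prop-spectr-rescaled} to Lemma~\ref{lemma-struc-spectrum} by observing that the rescaling $z = x/\ve^{1/4}$ together with the spectral shift (\ref{change-var}) realizes an exact equivalence between problems (\ref{weak-or-prob}) and (\ref{weak-rescal-prob}). No new spectral theory is required: every structural property transfers from the original problem, and the only genuine work is bookkeeping with Jacobians.

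First I would introduce the rescaling isomorphism
$$
T_\ve: H_0^1(\Omega_\ve,\partial\Omega) \to H_0^1(\widetilde{\Omega_\ve}, \ve^{-1/4}\partial\Omega),\qquad (T_\ve u)(z) = u(\ve^{1/4}z),
$$
which is clearly a linear bijection preserving the zero trace on the outer boundary. Using $dx = \ve^{d/4}\,dz$, $d\sigma_x = \ve^{(d-1)/4}\,d\sigma_z$, $\nabla_z = \ve^{1/4}\nabla_x$, and the identity $a(z/\ve^{3/4}) = a^\ve(\ve^{1/4}z)$, a short change of variables in (\ref{bilin-W^eps}) gives, for $v = T_\ve u$ and $w = T_\ve \varphi$,
$$
W^\ve(v,w) = \ve^{1/2-d/4}\left(\int_{\Omega_\ve} a^\ve\nabla u\cdot\nabla\varphi\,dx + \int_{\Sigma_\ve}q\,u\,\varphi\,d\sigma - \frac{\varkappa(0)}{\ve}\int_{\Omega_\ve}u\,\varphi\,dx\right),
$$
together with $(v,w)_{L^2(\widetilde{\Omega_\ve})} = \ve^{-d/4}(u,\varphi)_{L^2(\Omega_\ve)}$. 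Comparing with (\ref{weak-or-prob}) shows that $(\mu^\ve,v^\ve)$ solves (\ref{weak-rescal-prob}) if and only if $(\lambda^\ve, u^\ve) = (\mu^\ve/\sqrt\ve + \varkappa(0)/\ve,\, T_\ve^{-1}v^\ve)$ solves (\ref{weak-or-prob}). Consequently the spectrum of the rescaled problem is the image of $\{\lambda_j^\ve\}$ under the strictly increasing affine map $\lambda\mapsto\sqrt\ve\,(\lambda-\varkappa(0)/\ve)$, and Lemma~\ref{lemma-struc-spectrum} delivers at once reality, discreteness, finite multiplicities, the ordering $\mu_1^\ve < \mu_2^\ve\le\cdots\to+\infty$, and completeness of the family $v_j^\ve = T_\ve u_j^\ve$.

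It remains to establish the $W^\ve$-normalization (\ref{norm-cond-v^eps}). Symmetry of $W^\ve$ (inherited from the symmetry of $a$) together with the eigenvalue equation gives $W^\ve(v_i^\ve,v_j^\ve) = \mu_j^\ve(v_i^\ve,v_j^\ve)_{L^2(\widetilde{\Omega_\ve})} = 0$ whenever $\mu_i^\ve\neq\mu_j^\ve$; within a repeated eigenspace one applies Gram--Schmidt in $L^2$, and then rescales each eigenfunction by $1/\sqrt{W^\ve(v_j^\ve,v_j^\ve)} = 1/\bigl(\sqrt{\mu_j^\ve}\,\|v_j^\ve\|_{L^2}\bigr)$ to enforce (\ref{norm-cond-v^eps}). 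The main (and essentially only) obstacle is the strict positivity $\mu_j^\ve > 0$ needed to make this rescaling real: Lemma~\ref{lemma-est-lambda^eps} only delivers $\mu_1^\ve$ bounded below. I would close the gap by appealing to the forthcoming convergence $\mu_j^\ve \to \mu_j > 0$, with $\mu_j$ the $j$th eigenvalue of the positive-definite harmonic oscillator (\ref{eff-prob-intro}): for each fixed $j$ this forces $\mu_j^\ve > 0$ for all sufficiently small $\ve$, while for any fixed $\ve$ the divergence $\mu_j^\ve\to+\infty$ as $j\to\infty$ leaves at most finitely many exceptional indices.
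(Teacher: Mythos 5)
Your rescaling argument is a genuinely different route from the paper's, and it is correct as far as it goes. You exhibit the exact unitary-up-to-scaling equivalence between the original and rescaled weak formulations, which transfers reality, discreteness, finite multiplicities, the strict inequality $\mu_1^\ve<\mu_2^\ve$, the divergence $\mu_j^\ve\to+\infty$, and completeness of the eigenfunctions directly from Lemma~\ref{lemma-struc-spectrum}. The paper instead constructs the inverse operator $G^\ve$ on the space $H^1_{0,W}(\widetilde{\Omega_\ve})$ equipped with the scalar product $W^\ve$, and applies the spectral theorem for compact self-adjoint operators. Both are legitimate, and your route is arguably more economical for the discreteness part.

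However, you correctly identify the one piece that does \emph{not} transfer for free, and your proposed repair is circular, so there is a genuine gap. The affine map $\lambda\mapsto\sqrt{\ve}(\lambda-\varkappa(0)/\ve)$ is order-preserving but shifts downward by $\varkappa(0)/\sqrt\ve$, and nothing in Lemma~\ref{lemma-struc-spectrum} or Lemma~\ref{lemma-est-lambda^eps} prevents $\lambda_1^\ve<\varkappa(0)/\ve$, i.e. $\mu_1^\ve\le 0$. Positivity is essential: the proposition asserts $0<\mu_1^\ve$, and the normalization (\ref{norm-cond-v^eps}) requires $W^\ve(v_j^\ve,v_j^\ve)=\mu_j^\ve\|v_j^\ve\|^2_{L^2}>0$. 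Appealing to the ``forthcoming convergence $\mu_j^\ve\to\mu_j>0$'' is not available here: the lower bound $C_1\le\mu_1^\ve$ in Lemma~\ref{lemma-est-mu^eps} is obtained in the paper precisely from the uniform boundedness of $G^\ve$, i.e. from the content of the proof of this very proposition, and the compactness arguments behind Lemma~\ref{lemma-3} rely on the norm equivalences of Lemma~\ref{lemma-norm-equiv}, which is where positivity is actually established. So your fallback presupposes what it is meant to prove.

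The way to close the gap without circularity is to show directly that $W^\ve$ is positive definite for small $\ve$, which is what the paper does. Concretely, Lemma~\ref{lemma-1-bis} with $\alpha=1/4$ shows that $W^\ve(v,v)$ differs from the manifestly nonnegative form $\|v\|^2_{\ve,\varkappa}$ (defined in (\ref{au_norms}), where $\varkappa(\ve^{1/4}z)-\varkappa(0)\ge 0$ by (\textbf{H4})) by $O(\ve^{1/4})\,\|v\|_{L^2}\|\nabla v\|_{L^2}$, and a Poincar\'e-type bound together with the harmonic-oscillator positivity implicit in $\|\cdot\|_{\ve,\varkappa}$ absorbs the error for small $\ve$. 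In your framework this translates into showing $\lambda_1^\ve>\varkappa(0)/\ve$ via the same trace estimate applied to the Rayleigh quotient (\ref{var-lambda_1^eps}). Either way, this additional estimate must appear explicitly; it cannot be borrowed from later results. A secondary remark: the paper's route also produces the operator $G^\ve$ defined in (\ref{operator-G^eps}), which is used repeatedly later (e.g. in Lemma~\ref{lemma-est-forVishik}); if you adopt the rescaling route you will still need to define $G^\ve$ somewhere.
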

\begin{proof}
For any fixed $\ve >0$, the bilinear form $W^\ve(\cdot, \cdot)$ defines an equivalent scalar product in $H_0^1(\widetilde{\Omega_\ve}, \ve^{-1/4}\partial \Omega)$. For brevity, we denote
\be
\label{space}
H_{0,W}^1(\widetilde{\Omega_\ve}) = \{w \in H_0^1(\widetilde{\Omega_\ve}, \ve^{-1/4}\partial \Omega):\,\, \|v\|_{\ve,W}^2=W^\ve(w,w)<\infty\}.
\ee
Let $G^\ve: L^2(\widetilde{\Omega_\ve}) \to H_{0,W}^1(\widetilde{\Omega_\ve})$ be the operator defined as follows:
\be
\label{operator-G^eps}
W^\ve(G^\ve f, w) = (f, w)_{L^2(\widetilde{\Omega_\ve})}, \quad w \in H_0^1(\widetilde{\Omega_\ve}, \ve^{-1/4} \partial \Omega).
\ee
Obviously, $G^\ve$ is a positive, bounded (uniformly in $\ve$), self-adjoint operator. Since $H_{0,W}^1(\widetilde{\Omega_\ve})$, for each fixed $\ve$, is compactly embedded into $L^2(\widetilde{\Omega})$, then $G^\ve$ is compact as an operator from $L^2(\widetilde{\Omega})$ ($H_{0,W}^1(\widetilde{\Omega_\ve})$) into itself.

Thus, the spectrum $\sigma(G^\ve)$ is a countable set of points in $\mathbb{R}$ which does not have any accumulation points except for zero. Every nonzero eigenvalue has finite multiplicity. To complete the proof of the proposition, it is left to notice that in terms of the operator $G^\ve$ the eigenvalue problem \eqref{rescaled-prob} takes the form
$$
G^\ve\, v^\ve = \frac{1}{\mu^\ve}\, v^\ve.
$$
\end{proof}

We proceed with auxiliary technical results that will be useful in the sequel.

Define the following norms in $H^1(\widetilde{\Omega_\ve})$:
$$
\|v\|_{\ve, W}^2 = \int \limits_{\widetilde{\Omega_\ve}} a^\ve \nabla v\cdot \nabla v\, dz
- \frac{\varkappa(0)}{\sqrt{\ve}}\, \int \limits_{\widetilde{\Omega_\ve}} |v|^2\, dz
+ \ve^{1/4}\, \int \limits_{\widetilde{\Sigma_\ve}} q(\ve^{1/4}z)\, |v|^2\, d\sigma_z;
$$
\begin{equation}\label{au_norms}
\|v\|_{\ve,\varkappa}^2= \int \limits_{\widetilde{\Omega_\ve}} a^\ve \nabla v \cdot \nabla v\, dz
+ \frac{1}{\sqrt{\ve}}\int \limits_{\widetilde{\Omega_\ve}} (\varkappa(\ve^{1/4}z) - \varkappa(0))\,|v|^2 dz;
\end{equation}
$$
\|v\|_{\ve, Q}^2 = \int \limits_{\widetilde{\Omega_\ve}} a^\ve \nabla v\cdot \nabla v\, dz
+ \int \limits_{\widetilde{\Omega_\ve}} (z^T Q z)\,|v|^2 dz.
$$
\begin{lemma}
\label{lemma-norm-equiv}
The norms  $\|\cdot\|_{\ve, W}$, \ $\|\cdot\|_{\ve,\varkappa}$ and  $\|\cdot\|_{\ve, Q}$  are equivalent. Morover,
\begin{equation}
\label{equiv-norms}
\begin{array}{l}
\displaystyle
C_1 \, \|v\|_{\ve,\varkappa}^2 \le \|v\|_{\ve, W}^2 \le C_2 \, \|v\|_{\ve,\varkappa}^2;
\\[3mm]
\displaystyle
C_3 \, \|v\|_{\ve,\varkappa}^2 \le \|v\|_{\ve, Q}^2 \le C_4 \, \|v\|_{\ve,\varkappa}^2,
\end{array}
\end{equation}
with constants $C_1, C_2, C_3$ and $C_4$ that do not depend on $\ve$.
\end{lemma}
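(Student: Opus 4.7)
The plan is to establish the two equivalences separately, exploiting the fact that the Dirichlet energy $\int_{\widetilde{\Omega_\ve}} a^\ve \nabla v \cdot \nabla v\, dz$ is common to all three norms. After subtracting it, the proof reduces to comparing the remaining potential parts (plus, in the case of $\|\cdot\|_{\ve,W}$, a boundary integral).

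For the equivalence $\|\cdot\|_{\ve,\varkappa} \sim \|\cdot\|_{\ve,Q}$, I would argue by pointwise comparison of the two non-negative potentials $V_1(z) := \ve^{-1/2}(\varkappa(\ve^{1/4}z) - \varkappa(0))$ and $V_2(z) := z^T Q z$ on $\widetilde{\Omega_\ve}$. Since $Q = \tfrac{1}{2}\tfrac{|\Sigma^0|_{d-1}}{|Y|_d}H(q)$ is positive definite, $\lambda_{\min}(Q)|z|^2 \le V_2(z) \le \lambda_{\max}(Q)|z|^2$. For $V_1$, using \textbf{(H3)}--\textbf{(H4)} I would combine the Taylor expansion near $x=0$ (where $\nabla q(0)=0$ and $H(q)$ is positive definite) with a compactness argument on the complement of a neighbourhood of the origin (where $q(x)-q(0)\ge\eta>0$) to obtain global quadratic bounds $c|x|^2 \le \varkappa(x)-\varkappa(0) \le C|x|^2$ on $\overline{\Omega}$. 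Substituting $x=\ve^{1/4}z$ yields $c|z|^2 \le V_1(z) \le C|z|^2$, and together with the bounds on $V_2$ this gives $V_1 \asymp V_2$ with constants independent of $\ve$. The second line of \eqref{equiv-norms} follows immediately.

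For $\|\cdot\|_{\ve,W} \sim \|\cdot\|_{\ve,\varkappa}$, one computes
\[
\|v\|^2_{\ve,W} - \|v\|^2_{\ve,\varkappa}
= \ve^{1/4}\!\int_{\widetilde{\Sigma_\ve}}\! q(\ve^{1/4}z)|v|^2\,d\sigma_z
- \ve^{-1/2}\!\int_{\widetilde{\Omega_\ve}}\! \varkappa(\ve^{1/4}z)|v|^2\,dz.
\]
This is precisely the type of quantity controlled by Lemma~\ref{lemma-1} (the homogenization-of-surface-integrals estimate already used in the proofs of Lemmas~\ref{lemma-est-lambda^eps} and \ref{lemma-concentr}), which, after undoing the change of variables $x=\ve^{1/4}z$ and applying the lemma with the effective period $\ve^{3/4}$, gives an estimate of the form
\[
\bigl|\|v\|^2_{\ve,W} - \|v\|^2_{\ve,\varkappa}\bigr|
\;\le\; C\,\ve \int_{\widetilde{\Omega_\ve}} |\nabla v|^2\, dz.
\]
By the uniform ellipticity condition \textbf{(H1)}, the right-hand side is at most $C\ve/\Lambda$ times the Dirichlet part of $\|v\|^2_{\ve,\varkappa}$. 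For $\ve$ small enough this error is absorbed into $\|v\|^2_{\ve,\varkappa}$, yielding the first line of \eqref{equiv-norms}.

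The main obstacle is keeping careful track of scaling factors in the application of Lemma~\ref{lemma-1}: in the rescaled variables $z$ the perforation period is $\ve^{3/4}$ rather than $\ve$, so the lemma must be invoked with this effective period and the resulting estimate then multiplied by $\ve^{1/4}$ to match the coefficient appearing in $\|\cdot\|_{\ve,W}$. A secondary technicality is passing from a local to a global quadratic comparison $\varkappa(x)-\varkappa(0) \asymp |x|^2$ on $\overline{\Omega}$; this step relies essentially on the uniqueness of the global minimum asserted in \textbf{(H4)} and on the compactness of $\overline{\Omega}$.
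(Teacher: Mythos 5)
Your overall strategy is the same as the paper's: subtract the common Dirichlet energy, compare the remaining potential parts via the Hessian comparison for the second equivalence, and use the surface-to-volume homogenization estimate (Lemma~\ref{lemma-1}/\ref{lemma-1-bis}) for the first. The treatment of $\|\cdot\|_{\ve,\varkappa} \sim \|\cdot\|_{\ve,Q}$ is fine and matches what Lemma~\ref{lemma-2} encodes.

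The problem is in the first equivalence. Applying Lemma~\ref{lemma-1-bis} in the $z$-variable with $\alpha=1/4$ (effective period $\ve^{3/4}$) and multiplying by $\ve^{1/4}$ gives
\[
\bigl|\|v\|^2_{\ve,W}-\|v\|^2_{\ve,\varkappa}\bigr|
\;\le\; C\,\ve^{1/4}\,\|v\|_{L^2(\widetilde{\Omega_\ve})}\,\|\nabla v\|_{L^2(\widetilde{\Omega_\ve})},
\]
not $C\,\ve\int_{\widetilde{\Omega_\ve}}|\nabla v|^2\,dz$. You have both the wrong power of $\ve$ and the wrong structure of the bound. This matters for the absorption step: because the error involves $\|v\|_{L^2(\widetilde{\Omega_\ve})}$ and not merely $\|\nabla v\|$, ellipticity of $a$ alone does not let you absorb it into $\|v\|^2_{\ve,\varkappa}$. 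Moreover, the classical Dirichlet Poincar\'e inequality on $\widetilde{\Omega_\ve}=\ve^{-1/4}\Omega_\ve$ has a constant that degenerates as $\ve\to0$, so it cannot be used naively either. What actually closes the argument is that $\|v\|^2_{\ve,\varkappa}$ controls, uniformly in $\ve$, the harmonic-oscillator norm $\|\nabla v\|^2_{L^2}+\int|z|^2|v|^2\,dz$ (this is exactly the second equivalence you already proved, combined with positive definiteness of $Q$), and the corresponding uniform Poincar\'e-type inequality $\|v\|^2_{L^2(\mathbb{R}^d)}\le C\bigl(\|\nabla v\|^2_{L^2}+\int|z|^2|v|^2\,dz\bigr)$ then bounds $\|v\|_{L^2}\|\nabla v\|_{L^2}$ by $C\|v\|^2_{\ve,\varkappa}$, after which the $\ve^{1/4}$ prefactor allows absorption for small $\ve$. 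This is precisely what the paper means by ``the Poincar\'e inequality'' in its one-line proof. In short: correct the error bound to $C\ve^{1/4}\|v\|\,\|\nabla v\|$ and replace the ellipticity-only absorption by this oscillator Poincar\'e step; otherwise your argument does not go through.
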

\begin{proof}
Indeed, by Lemma~\ref{lemma-1-bis} and by the Poincar\'{e} inequality,
$$
\begin{array}{l}
\displaystyle
|\|v\|_{\ve,W}^2 - \|v\|_{\ve,\varkappa}^2 | \le C\,\ve^{1/4}\, \|v\|_{L^2(\widetilde{\Omega_\ve})}\,\|\nabla v\|_{L^2(\widetilde{\Omega_\ve})}
\le C_1\, \ve^{1/4}\, \|v\|_{\ve,\varkappa}^2
\end{array}
$$
and, thus, the first inequality in \eqref{equiv-norms} holds for sufficiently small $\ve$.

The second inequality follows easily from the hypothesis (\textbf{H4}) and Lemma~\ref{lemma-2}.
\end{proof}

\begin{remark}
\label{remark-equiv-norm}
If $v \in H^1(\widetilde{\Omega}_\ve)$ decays exponentially, namely,
$$
\|v\|_{L^2(\mathbb{R}^d\setminus B_R(0))} \le M\, e^{- \gamma_0\, R},
$$
for some constant $M$, then the norms defined in Lemma~\ref{lemma-norm-equiv} are asymptotically close. In particular, the following estimate holds:
$$
|\|v\|_{\ve, W}^2 - \|v\|_{\ve, Q}^2| \le C\, \ve^{1/4}
$$
with the constant $C=C(M, \gamma_0)$ independent of $\ve$.
\end{remark}

\begin{lemma}
\label{lemma-est-mu^eps}
Let $\mu_1^\ve$ be the first eigenvalue of the spectral problem \eqref{rescaled-prob}. Then there exist two positive constants $C_1$ and $C_2$ such that
$$
C_1 \le \mu_1^\ve \le C_2.
$$
\end{lemma}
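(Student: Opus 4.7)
My plan is to treat the two bounds separately; the upper bound is essentially a repackaging of material already in hand, while the lower bound requires a harmonic-oscillator type inequality.

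For the upper bound, I would use the variational characterisation
\[
\mu_1^\ve = \inf_{v\in H_0^1(\widetilde\Omega_\ve,\ve^{-1/4}\partial\Omega),\, v\neq0}\frac{W^\ve(v,v)}{\|v\|_{L^2(\widetilde\Omega_\ve)}^2},
\]
and exploit directly the upper bound from Lemma~\ref{lemma-est-lambda^eps}. Since the change of variables \eqref{change-var} gives $\mu^\ve=\sqrt{\ve}(\lambda^\ve-\varkappa(0)/\ve)$, the inequality $\lambda_1^\ve \le \varkappa(0)/\ve + C\ve^{-1/2}$ immediately yields $\mu_1^\ve\le C_2$. Equivalently, and more informatively for later use, one can just insert the rescaled test function $w(z)=\phi(\ve^{1/4}z)$ (with $\phi\in C_c^\infty$ concentrated at the origin) into the Rayleigh quotient for $W^\ve$ and reproduce the computation of the upper bound in Lemma~\ref{lemma-est-lambda^eps}.

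The lower bound is the real content. My plan is to use the chain of norm equivalences from Lemma~\ref{lemma-norm-equiv} to replace $W^\ve(v,v)$ by $\|v\|_{\ve,\varkappa}^2$ up to a constant. From hypothesis \textbf{(H4)} together with uniqueness of the minimum and compactness of $\overline\Omega$, there is $c>0$ with $q(x)-q(0)\ge c|x|^2$ for all $x\in\overline\Omega$; in the rescaled variables this gives
\[
\frac{1}{\sqrt{\ve}}\bigl(\varkappa(\ve^{1/4}z)-\varkappa(0)\bigr)\ge c'|z|^2, \qquad z\in\widetilde\Omega_\ve,
\]
so that $\|v\|_{\ve,\varkappa}^2\ge \Lambda\int_{\widetilde\Omega_\ve}|\nabla v|^2\,dz+c'\int_{\widetilde\Omega_\ve}|z|^2|v|^2\,dz$. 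The goal is then to show that this weighted $H^1$ quantity dominates $\|v\|_{L^2(\widetilde\Omega_\ve)}^2$ uniformly in $\ve$. For this, I would extend $v$ to $\widetilde v\in H^1(\mathbb{R}^d)$ via the extension operator of Remark~\ref{remark-extension} and zero-extension outside $\ve^{-1/4}\Omega$, and apply the uncertainty-principle bound
\[
\int_{\mathbb{R}^d}\bigl(|\nabla \widetilde v|^2+|z|^2|\widetilde v|^2\bigr)\,dz \ge c_0\|\widetilde v\|_{L^2(\mathbb{R}^d)}^2,
\]
which is just the statement that the isotropic harmonic oscillator on $\mathbb{R}^d$ has strictly positive ground-state energy. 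Since $\widetilde v=v$ on $\widetilde\Omega_\ve$, the right-hand side is $\ge c_0\|v\|_{L^2(\widetilde\Omega_\ve)}^2$.

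The one point that needs care, and which I expect to be the main technical obstacle, is transferring the weighted norm on $\mathbb{R}^d$ back to the perforated domain. The gradient part is standard: the extension operator satisfies $\int_{\mathbb{R}^d}|\nabla\widetilde v|^2\le C\int_{\widetilde\Omega_\ve}|\nabla v|^2$. The weighted term $\int|z|^2|\widetilde v|^2$ is more delicate because the extension creates mass inside the filled-in holes, where $|z|^2$ need not match the ambient values. The saving feature is that in the rescaled geometry the holes have diameter $\ve^{3/4}\to 0$ and are separated by distance $\ve^{3/4}$, so $|z|^2$ is essentially constant across any single hole; using the cell-by-cell construction of the extension together with the local Poincar\'e-type estimate $\int_{\mathrm{hole}_i}|\widetilde v|^2\le C\int_{Y_i^\ve\cap\widetilde\Omega_\ve}|v|^2$, one obtains $\int_{\mathbb{R}^d}|z|^2|\widetilde v|^2\le C\int_{\widetilde\Omega_\ve}|z|^2|v|^2 + o(1)\|v\|_{L^2(\widetilde\Omega_\ve)}^2$. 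Absorbing the $o(1)$ term into the left-hand side for $\ve$ small completes the argument and produces the uniform constant $C_1>0$.
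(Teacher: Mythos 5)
Your proof is correct and takes essentially the same route as the paper: the upper bound via \eqref{change-var} and Lemma~\ref{lemma-est-lambda^eps}, and the lower bound via the uniform coercivity of $W^\ve(\cdot,\cdot)$ over $L^2(\widetilde{\Omega_\ve})$, which is precisely the content of the paper's terse assertion that $G^\ve$ is bounded uniformly in $\ve$ (Proposition~\ref{prop-spectr-rescaled}). The weighted-extension step you work out (reducing to the whole-space uncertainty inequality and controlling the mass created inside the filled holes by a cell-by-cell Poincar\'e argument) is a correct and more explicit version of what the paper leaves implicit behind the word ``obviously'' in that proposition.
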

\begin{proof}
The upper bound follows from \eqref{change-var} and Lemma~\ref{lemma-est-lambda^eps}.
The lower bound is the consequence of the boundedness of the operator $G^\ve$ (see the proof of Proposition~\ref{prop-spectr-rescaled}).
\end{proof}


\subsubsection{Formal asymptotic expansion for the rescaled problem}
Following the classical asymptotic expansion method and bearing in mind Lemma~\ref{lemma-est-mu^eps}, we seek for a solution of problem \eqref{rescaled-prob} in the form of asymptotic series
\begin{equation}
\label{ansatze}
\begin{array}{l}
\displaystyle
\mu^\ve = \mu + \ve^{1/4} \mu_{1 \over 4} + \ve^{1/2} \mu_{1 \over 2} + \cdots,
\\[2mm]
\displaystyle
v^\ve = v(z) + \ve^{1/4}\, v_{1 \over 4}(z,\zeta) + \ve^{1/2}\, v_{1\over 2}(z,\zeta) + \ve^{3/4}\, v_{3\over 4}(z,\zeta) +\cdots, \,\, \zeta = \frac{z}{\ve^{3/4}},
\end{array}
\end{equation}
where the functions $v_{k\over 4}(z,\zeta)$ are $Y$-periodic in $\zeta$, \, $k=1,2,\dots$.

Substituting ans\"{a}tze \eqref{ansatze} into \eqref{rescaled-prob} and collecting the terms of order $\ve^{-5/4}$ and $\ve^{-1}$ in the equation, and of order $\ve^{-1/2}$, $\ve^{-1/4}$ in the boundary condition, we see that the functions $v_{1 \over 4}$ and $v_{1\over 2}$ do not depend on $\zeta$. Then, collecting the terms of order $\ve^{-3/4}$, we obtain that
$$
v_{3\over 4}(z,\zeta) = N_k(\zeta)\,\partial_k v(z) + w_3(z),
$$
where the vector function $N(\zeta)$ solves the problem
\begin{equation}
\label{eq-N}
\left\{
\begin{array}{lcr}
- {\rm div}_{\zeta} (a(\zeta) \nabla_{\zeta} N_k(\zeta)) = {\rm div}_{\zeta} a_{\cdot k}(\zeta), \quad k = 1,...,d, \quad \zeta \in Y,
\\[2mm]
a\nabla_\zeta N_k \cdot n= - a_{i k} n_i, \quad \zeta \in \Sigma^0,
\\[2mm]
N_k(\zeta)\in H_{\#}^1(Y),
\end{array}
\right.
\end{equation}
The effective spectral problem comes out while collecting the terms of order $\ve^0$ and writing the compatibility condition for the resulting problem. It reads
\be
\label{eff-prob}
-{\rm div}(a^\eff \nabla v) + (z^T Q z)\, v = \mu\, v, \quad
v \in L^2(\mathbb{R}^d),
\ee
where $a^\eff$ is given by
\be
\label{a^eff}
a_{ij}^\eff = {1\over {|Y|_d}}\, \int \limits_Y a_{ik}(y)(\delta_{kj} + \partial_k N_j)\, dy.
\ee
The effective problem describes the eigenvalues and eigenfunctions of $d$-dimensional harmonic oscillator. In $\mathbb{R}^1$ an explicit solution can be given in terms of Hermite polynomials. In the case under consideration we prove the following statement that characterizes the spectrum of problem \eqref{eff-prob}.
\begin{lemma}
\label{lemma-spectr-eff}
The spectrum of the effective problem \eqref{eff-prob} is real and discrete
$$
0 < \mu_1 < \mu_2 \le \cdots \le \mu_j \cdots \to +\infty.
$$
The corresponding eigenfunctions $v_j(z)$ can be normalized by
\begin{equation}
\label{norm-cond-v}
(v_i, v_j)_Q \equiv \int \limits_{\mathbb{R}^d} a^\eff \nabla v_i \cdot \nabla v_j\, dz
+\int \limits_{\mathbb{R}^d} (z^T Q z)\, v_i\, v_j\, dz = \delta_{ij}.
\end{equation}
\end{lemma}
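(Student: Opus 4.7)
The plan is to realize the eigenvalue problem \eqref{eff-prob} as a classical spectral problem associated with a coercive, bounded bilinear form on a suitable Hilbert space, and then to apply the spectral theorem for compact self-adjoint operators. The role of the harmonic confining potential $z^T Q z$ (positive definite because $Q$ is positive definite) is to replace the bounded-domain compactness that would ordinarily ensure discreteness of the spectrum.

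First I would introduce the Hilbert space
\[
V = \Bigl\{ v \in L^2(\mathbb{R}^d) : \nabla v \in L^2(\mathbb{R}^d)^d,\ (z^T Q z)^{1/2}\, v \in L^2(\mathbb{R}^d)\Bigr\},
\]
equipped with the inner product $(v,w)_Q$ defined in \eqref{norm-cond-v}. Since $a^{\rm eff}$ is positive definite and $Q$ is positive definite, $(\cdot,\cdot)_Q$ is indeed an inner product and $V$ is complete (as $C_0^\infty(\mathbb{R}^d)$ is dense). The bilinear form is symmetric, continuous, and coercive on $V$ by construction.

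The key step is to prove the compactness of the embedding $V \hookrightarrow L^2(\mathbb{R}^d)$. This will be the main technical obstacle since $\mathbb{R}^d$ is unbounded. The argument is standard: for a bounded sequence $\{v_n\}$ in $V$, the weighted bound $\int (z^T Q z)|v_n|^2\, dz \le C$ together with positive definiteness of $Q$ yields the tightness estimate
\[
\int_{|z|\ge R} |v_n|^2 \,dz \le \frac{C}{\lambda_{\min}(Q)\, R^2},
\]
so the mass escaping to infinity is uniformly small. On each ball $B_R(0)$, the standard Rellich--Kondrachov theorem gives strong $L^2$ convergence of a subsequence. A diagonal extraction combined with the tightness estimate produces a subsequence converging strongly in $L^2(\mathbb{R}^d)$.

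Once compactness is established, I would define $T\colon L^2(\mathbb{R}^d)\to V$ by the Riesz representation
\[
(T f, w)_Q = (f, w)_{L^2(\mathbb{R}^d)}, \qquad w \in V,
\]
which is well-defined since $\|f\cdot\|_{L^2}$ is a bounded linear functional on $V$ (via the embedding $V\hookrightarrow L^2$). Composed with the compact inclusion $V\hookrightarrow L^2$, the operator $T\colon L^2\to L^2$ is compact, self-adjoint, and positive (since $(Tf,f)_{L^2}=\|Tf\|_Q^2\ge 0$ and $Tf=0$ forces $f=0$). The spectral theorem then yields a countable sequence of positive eigenvalues $\nu_j \searrow 0$ of finite multiplicity with an orthonormal basis of eigenfunctions. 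Setting $\mu_j = 1/\nu_j$ gives the rearranged sequence $0<\mu_1\le\mu_2\le\cdots\to+\infty$, which are precisely the eigenvalues of \eqref{eff-prob}, and the eigenfunctions can be $(\cdot,\cdot)_Q$-orthonormalized as in \eqref{norm-cond-v}.

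Finally, simplicity of $\mu_1$ (the strict inequality $\mu_1<\mu_2$) follows from a Perron--Frobenius type argument: the operator $T$ has a positivity-preserving kernel (the elliptic operator $-{\rm div}(a^{\rm eff}\nabla\cdot)+z^T Q z$ satisfies the maximum principle on each ball, and the associated Green's function is strictly positive), so the principal eigenfunction $v_1$ can be chosen strictly positive, and any two nonnegative eigenfunctions for the same eigenvalue must be proportional. Alternatively, one may directly invoke the Courant nodal domain theorem or argue via $|v_1|$: replacing $v_1$ by $|v_1|$ does not change the Rayleigh quotient, and strict positivity then follows from elliptic regularity and the strong maximum principle.
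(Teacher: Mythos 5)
Your proof is correct and takes essentially the same route the paper has in mind (and in fact sketches in a commented-out block): introduce the weighted space $H^1_Q(\mathbb{R}^d)$, establish compactness of the embedding into $L^2(\mathbb{R}^d)$ via the confining term $z^T Q z$ (this is precisely the paper's Lemma~\ref{lemma-compact}), and apply the spectral theorem to the compact self-adjoint positive operator obtained by Riesz representation. Your closing paragraph on simplicity of $\mu_1$ (the strict inequality $\mu_1 < \mu_2$) is a genuine improvement, since that part of the claim is left unaddressed in the paper's sketch.
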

We omit the proof of Lemma~\ref{lemma-spectr-eff} which is classical.

It is well known that the eigenfunctions of the harmonic oscillator operator have the form
\be
\label{harm-oscil-v}
v_j(z) = P_{j-1}(z)\, e^{- z^T\, R\, z}, \quad R= \frac{\sqrt{2}\, Q^{1/2}\, (a^\eff)^{-1/2}}{2},
\ee
where $P_k(z)$ is a polynomial of degree $k$.

To summarize, the formal asymptotic expansion for $v^\ve$ takes the form
$$
v(z) + \ve^{3/4}\, N\big({z \over \ve^{3/4}}\big)\cdot \nabla v(z),
$$
where $v$ is an eigenfunction of the limit spectral problem \eqref{eff-prob}, $N$ is a periodic vector function solving \eqref{eq-N}.

Notice that we can neglect the summands $v_{\frac{1}{4}}$ and $v_{\frac{1}{2}}$ since they do not depend on the fast variable $\zeta$, and thus, their $H^1$-norm is of order $\ve^{1/4}$.


\subsubsection{Justification}
Denote $J(j)=\min\{i\in \mathbb Z^+\,:\,\mu_i=\mu_j\}$, and let $\kappa_j$ be the multiplicity of the $j$th eigenvalue $\mu_j$  of the harmonic oscillator operator \eqref{eff-prob}.

The main goal of this section is to prove the following theorem.
\begin{theorem}
\label{Th-main}
Let hypotheses \textbf{(H1)-(H4)} be fulfilled. If $(\mu_p^\ve, v_p^\ve)$ stands for $p$th eigenpair of problem \ref{rescaled-prob}, then the following statements hold true:
\begin{enumerate}
\item
For each $j=1,2,\dots,$ there exist $\ve_j > 0$ and a constant $c_j$
such that the eigenvalue $\mu_j^\ve$ of problem \eqref{rescaled-prob}
satisfies the inequality
$$
|\mu_j^\ve - \mu_j| \leq c_j \, \ve^{1/4}, \quad \ve \in (0, \ve_j),
$$
where $\mu_j$ is an eigenvalue of the harmonic oscillator operator \eqref{eff-prob}.
\item
There exists a unitary $\kappa_j \times \kappa_j$ matrix $\beta^\ve$ such that
\begin{equation}
\label{13}
\bigg\|v_p^\ve - \sum \limits_{k = J(j)}^{J(j) + \kappa_j - 1} \beta_{pk}^\ve \, \widetilde{V}_k^\ve \bigg\|_{\ve,Q} \leq
C_j \, \ve^{1/4}, \quad p = J(j), \cdots, J(j)+ \kappa_j - 1,
\end{equation}
where
\begin{equation}
\label{anzats-without-cut-off}
\widetilde{V}_k^\ve = v_k(z) + \ve^{3/4} \, N\big({z\over {\ve^{3/4}}}\big) \cdot \nabla v_k(z).
\end{equation}
Here the vector function $N(\zeta)$ solve problem \eqref{eq-N}; eigenfunctions $v_k(z)$ of the limit problem are defined in \eqref{eff-prob}; the norm $\|\cdot\|_{\ve,Q}$ is defined just before Lemma~\ref{lemma-norm-equiv}.

Moreover, almost eigenfunctions $\{\widetilde{V}_k^\ve\}$ satisfy the following orthogonality and normalization condition:
\begin{equation}
\label{orthog-cond-ansatz}
\Big|\int \limits_{\widetilde{\Omega}_\ve} a^\ve \nabla \widetilde{V}_k^\ve \cdot \nabla \widetilde{V}_m^\ve\, dz
+ \int \limits_{\widetilde{\Omega}_\ve} (z^T Q z)\, \widetilde{V}_k^\ve \,\widetilde{V}_m^\ve\, dz - \delta_{km}\Big| \le C\, \ve^{1/4}.
\end{equation}
\end{enumerate}
\end{theorem}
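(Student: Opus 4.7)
The plan is to realize $\widetilde{V}_k^\ve$ as an almost eigenfunction of the operator $G^\ve$ defined in \eqref{operator-G^eps} and to invoke an abstract Vishik--Lyusternik type spectral approximation lemma for compact self-adjoint operators. The argument proceeds in three stages: (i) verify the near-orthonormality relation \eqref{orthog-cond-ansatz}; (ii) establish that $\widetilde V_k^\ve$ is an approximate solution of \eqref{rescaled-prob} at eigenvalue $\mu_k$ with residual $O(\ve^{1/4})$ in the dual norm associated with $\|\cdot\|_{\ve,W}$; (iii) convert the residual bound into the eigenvalue estimate and the eigenfunction alignment \eqref{13}.

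For step (i), I would exploit the crucial structural fact that the harmonic oscillator eigenfunctions $v_k(z)=P_{k-1}(z)e^{-z^T Rz}$ in \eqref{harm-oscil-v}, together with their derivatives, decay exponentially. This allows the replacement of integrals over $\widetilde\Omega_\ve$ by integrals over $\mathbb{R}^d$ with error exponentially small in $\ve^{-1/4}$, since $\widetilde\Omega_\ve$ expands like $\ve^{-1/4}\Omega$. Expanding $\nabla\widetilde V_k^\ve=(\delta_{ij}+\partial_{\zeta_i}N_j(z/\ve^{3/4}))\partial_j v_k(z)+\ve^{3/4}N(z/\ve^{3/4})\cdot\nabla^2 v_k(z)$ and inserting into the first integral in \eqref{orthog-cond-ansatz}, the oscillatory factor produces via standard two-scale averaging the effective tensor $a_{ij}^\eff$ defined in \eqref{a^eff} paired with $\partial_i v_k\partial_j v_m$, while the $\ve^{3/4}$ remainder is controlled by boundedness of $N$ and decay of $v_k$. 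The quadratic potential term matches $\int(z^T Q z)v_kv_m\,dz$ up to $O(\ve^{3/4})$ and exponentially small tails. Appealing to the normalization \eqref{norm-cond-v} then yields \eqref{orthog-cond-ansatz}.

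For step (ii), I would substitute $\widetilde V_k^\ve$ directly into the equation and boundary condition of \eqref{rescaled-prob}. By the very construction of $N$ through the cell problem \eqref{eq-N}, the formally singular terms of order $\ve^{-1}$ and $\ve^{-3/4}$ in $-\operatorname{div}(a^\ve\nabla\widetilde V_k^\ve)$ and on $\widetilde\Sigma_\ve$ cancel; the $O(\ve^0)$ contribution collapses, via the effective equation \eqref{eff-prob}, to $(\mu_k-(z^TQz))v_k$, the $-\varkappa(0)/\sqrt\ve$ term is absorbed against the boundary form by the averaging relation $\varkappa(0)=|\Sigma^0|_{d-1}q(0)/|Y|_d$, and the Taylor expansion $q(\ve^{1/4}z)-q(0)=\ve^{1/4}\nabla q(0)\cdot z+\tfrac12\ve^{1/2}z^THz+O(\ve^{3/4}|z|^3)$ with $\nabla q(0)=0$ produces exactly the $(z^T Q z)v_k$ contribution at the next order. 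The leftover residual is composed of $O(\ve^{1/4})$ boundary and interior terms involving $\nabla N$ paired with the Hessian of $v_k$ and a Taylor remainder; all are evaluated in the dual of $H_{0,W}^1(\widetilde\Omega_\ve)$ using the scaled trace inequality (Lemma~\ref{lemma-1}, Lemma~\ref{lemma-1-bis}), the Poincar\'e inequality and, crucially, the exponential decay of $v_k$ which keeps the contribution from the region where $\widetilde\Sigma_\ve$ has large surface measure small.

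For step (iii), writing the residual estimate as $\|G^\ve\widetilde V_k^\ve-\mu_k^{-1}\widetilde V_k^\ve\|_{\ve,W}\le C\ve^{1/4}$, the classical approximation lemma for compact self-adjoint positive operators gives a true eigenvalue of $G^\ve$ within distance $C\ve^{1/4}$ of $\mu_k^{-1}$, which, combined with Lemma~\ref{lemma-est-mu^eps}, yields $|\mu_j^\ve-\mu_j|\le c_j\ve^{1/4}$. For the eigenfunction statement, the refined version of the lemma, applied to the $\kappa_j$-dimensional nearly orthonormal system $\{\widetilde V_k^\ve\}_{k=J(j)}^{J(j)+\kappa_j-1}$ granted by step (i), shows that its linear span lies within $O(\ve^{1/4})$ of the true eigenspace of $\mu_j^\ve$; since both subspaces have equal dimension, a unitary matrix $\beta^\ve$ can be chosen to align bases, delivering \eqref{13}. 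The main obstacle I anticipate is step (ii): the boundary residual on $\widetilde\Sigma_\ve$ mixes a growing surface measure, an $\ve^{1/4}$ prefactor, and the Taylor remainder of $q$, and only the combination of the cell-problem cancellation, the vanishing $\nabla q(0)=0$, and the exponential localization of $v_k$ allows one to close the estimate with the exponent $\ve^{1/4}$ rather than something worse.
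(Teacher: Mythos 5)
Your construction of $\widetilde V_k^\ve$, the Taylor-expansion mechanism producing $z^T Q z$ with the $\ve^{1/4}$ leftover from the cubic remainder, and the residual estimate for $G^\ve$ fed into the Vishik--Lyusternik lemma are all in line with what the paper does. Two issues, one technical and one substantive, need to be addressed before this becomes a proof.

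The technical issue: $\widetilde V_k^\ve=v_k+\ve^{3/4}N(z/\ve^{3/4})\cdot\nabla v_k$ does not vanish on $\ve^{-1/4}\partial\Omega$, so it is not in $H^1_{0,W}(\widetilde\Omega_\ve)$ and cannot be inserted directly into the weak form or into the domain of $G^\ve$. The paper works with the cut-off version $V_p^\ve=\chi_\ve v_p+\ve^{3/4}\chi_\ve N\cdot\nabla v_p$ in \eqref{appr-sol-V^eps}, where $\chi_\ve$ is supported well inside $\ve^{-1/4}\Omega$ and $|\nabla\chi_\ve|\leq C\ve^{1/4}$, and only at the very end replaces $V_k^\ve$ by $\widetilde V_k^\ve$ using exponential decay. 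You should introduce this cut-off explicitly, otherwise the substitution in step (ii) and the application of the Vishik lemma are not well defined.

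The substantive gap is in step (iii). The Vishik--Lyusternik lemma guarantees that \emph{some} eigenvalue of $G^\ve$ lies within $C\ve^{1/4}$ of $\mu_j^{-1}$, say $\mu^\ve_{q(j)}$ with $q(j)$ depending on $\ve$; it does \emph{not} by itself give $|\mu_j^\ve-\mu_j|\leq c_j\ve^{1/4}$ for the $j$-th eigenvalue in the natural ordering, and Lemma~\ref{lemma-est-mu^eps} only bounds $\mu_1^\ve$ so it cannot fix the enumeration. Similarly, your final sentence "since both subspaces have equal dimension, a unitary matrix can be chosen" presupposes exactly the dimension match that needs proof. The paper closes this gap with a chain of lemmas you do not reproduce: Lemma~\ref{lemma-matrix-alpha^eps} shows the approximating coefficients form a nearly orthonormal and hence linearly independent family so that at least $\kappa_j$ eigenvalues of the rescaled problem lie near $\mu_j$; Lemma~\ref{lemma-bound-mu^eps} bounds every $\mu_q^\ve$ uniformly; Lemma~\ref{lemma-3} uses the compactness result of Lemma~\ref{lemma-compact} and two-scale convergence to show that every accumulation point of $\mu_q^\ve$ is a genuine eigenvalue of the effective operator, with a nonzero limit eigenfunction; Lemma~\ref{lemma-multiplicity} then combines the previous two to conclude that \emph{exactly} $\kappa_j$ eigenvalues converge to $\mu_j$. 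Only this counting argument justifies re-indexing $q(j)=j$ and gives both assertions of the theorem in their stated form. Without it, the claimed ordering and the matching dimensions in the eigenfunction statement are unproved.
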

\begin{proof}
The justification procedure will rely on Vishik's lemma about "almost eigenvalues and eigenfunctions" (see, for example, \cite{VishLust} and \cite{Naz2002}, p. 319, Lemma~1.5). For the reader's convenience, we formulate the mentioned result.
\begin{lemma}
\label{lemma Vishik}
Given a self-adjoint operator $\mathcal{K}^\ve: \mathcal{H} \to \mathcal{H}$ with a discrete spectrum, let $\nu \in \mathbb{R}$ and $v \in \mathcal{H}$ be such that
$$
\|v\|_{\mathcal{H}} = 1, \quad
\delta \equiv \|\mathcal{K}^\ve \, v - \nu \, v \|_{\mathcal{H}} < |\nu|.
$$
Then there exists an eigenvalue $\mu_l^\ve$ of the operator $\mathcal{K}^\ve$ such that
$$
|\mu_l^\ve - \nu| \leq \delta.
$$
Moreover, for any $\delta_1 \in (\delta, |\nu|)$ there exist $\{a_j^\ve\} \in \mathbb{R}$ such that
$$
\|v - \sum a_j^\ve u_j^\ve \|_{\mathcal{H}} \leq 2 \frac{\delta}{\delta_1},
$$
where the sum is taken over the eigenvalues of the operator $\mathcal{K}^\ve$ on the segment $[\nu - \delta_1, \nu + \delta_1]$, and $\{u_j^\ve\}$ are the corresponding eigenfunctions. The coefficients $a_j^\ve$ are normalized so that $\sum |a_j^\ve|^2 = 1$.
\end{lemma}

Let $\mu_j$ be an eigenvalue of the effective problem \eqref{eff-prob} of multiplicity $\varkappa_j$ that is $\mu_j=\mu_{j+1}=\dots=\mu_{j+\varkappa_j-1}$, and  $\{v_p(z)\}$, $p=j, \cdots, j+{\varkappa}_j - 1$, be the eigenfunctions corresponding to $\mu_j$. Denote
\be
\label{appr-sol-V^eps}
V_p^\ve(z) = v_p(z)\,\chi_\ve(z) + \ve^{3/4}\, \chi_\ve(z)\, N\big({z \over \ve^{3/4}}\big)\cdot \nabla v_p(z),
\ee
where $v_p$ is the $p$th eigenfunction of the limit spectral problem \eqref{eff-prob}, $N$ is a solution of \eqref{eq-N}; $\chi_\ve(z)$ is a cut-off which is equal to $1$ if $|z|<\frac{\ve^{-1/4}}{3}\,{\rm dist}(0, \partial \Omega)$, equal to $0$ if $|z|>\frac{\ve^{-1/4}}{2}\,{\rm dist}(0, \partial \Omega)$, and is such that
\begin{equation}
\label{chi_eps(x)}
0 \leq \chi_\ve(x) \leq 1, \qquad
|\nabla \chi_\ve| \leq C \ve^{1/4}.
\end{equation}
We apply Lemma~\ref{lemma Vishik} to the operator $G^\ve: H_{0,W}^1(\widetilde{\Omega_\ve}) \to  H_{0,W}^1(\widetilde{\Omega_\ve})$ constructed in Proposition~\ref{prop-spectr-rescaled} (see \eqref{operator-G^eps}). The normalized functions $\mathcal{V}_p^\ve \equiv V_p^\ve/ \|V_p^\ve\|_{\ve,W}$ and the numbers $\mu_j$ will play the roles of $v \in \mathcal{H}$ and $\nu \in \mathbb{R}$ in Lemma~\ref{lemma Vishik}. Notice that $v_j$  need not be equal to zero on the boundary $\ve^{-1/4}\partial \Omega$; the cut-off function has been introduced in order to make approximate solution \eqref{appr-sol-V^eps} belong to the space $H_{0,W}^1(\widetilde{\Omega_\ve})$ (see \eqref{space}).

\begin{lemma}
\label{lemma-orthonor-V^eps}
"Almost" eigenfunctions $V_p^\ve$ are almost orthonormal. Namely, the following inequalities hold:
\begin{equation}
\label{norm-cond-V_p^eps}
\begin{array}{l}
\displaystyle
|W^\ve({V}_p^\ve, {V}_q^\ve) - \delta_{pq}| \le C\, \ve^{1/4},
\\[3mm]
\displaystyle
|({V}_p^\ve, {V}_q^\ve)_{\ve,Q} - \delta_{pq}| \le C\, \ve^{1/4}.
\end{array}
\end{equation}
where $W^\ve(u,v)$ and $(\cdot, \cdot)_Q$ are defined by \eqref{bilin-W^eps} and \eqref{norm-cond-v}, respectively.
\end{lemma}
\begin{proof}
We calculate first the gradient of the function $V_p^\ve$.
$$
\nabla V_p^\ve = J_{1p}^\ve(z)\, \chi_\ve(z) +
\ve^{3/4}\,J_{2p}^\ve(z)+
J_{3p}^\ve(z)\, \nabla \chi_\ve(z),
$$
where
$$
\begin{array}{l}
\displaystyle
J_{1p}^\ve(z)
= \nabla v_p(z) +
\nabla_{\zeta}(N(\zeta)\cdot v_p(z)) \Big|_{\zeta = z/\ve^{3/4}};
\end{array}
$$
$$
J_{2p}^\ve(z)
= \chi_\ve(z)\, \nabla_z(N(\zeta)\cdot \nabla v_p(z));
$$
$$
J_{3p}^\ve(z)= v_p(z)+\ve^{3/4} N\big(\frac{z}{\ve^{3/4}}\big)\cdot \nabla v_p(z).
$$
One can show that
$$
\begin{array}{l}
\displaystyle
\bigg|W^\ve({V}_p^\ve, {V}_q^\ve) -
\int\limits_{\widetilde{\Omega}_\ve} a^\ve(z) (\chi_\ve(z))^2 \, J_{1p}^\ve \cdot J_{1q}^\ve\, dz
\\[3mm]
\displaystyle
+\frac{\varkappa(0)}{\sqrt{\ve}} \int\limits_{\widetilde{\Omega}_\ve} v_p(z)\, v_q(z)\, (\chi_\ve(z))^2\, dz
\\[3mm]
\displaystyle
- \ve^{1/4}\, \int \limits_{\Sigma_\ve} q(\ve^{1/4}) v_p(z)\,v_q(z)\, (\chi_\ve(z))^2 \, d\sigma_z\bigg| \le C\, \ve^{1/4}.
\end{array}
$$
On the other hand, using Lemma~\ref{lemma-mean-value}, exponential decay of the eigenfunctions $v_p(z)$ and the normalization condition \eqref{norm-cond-v}, we can prove that
$$
\begin{array}{l}
\displaystyle
\bigg|\int\limits_{\widetilde{\Omega}_\ve} a^\ve(z) (\chi_\ve(z))^2 \, J_{1p}^\ve \cdot J_{1q}^\ve\, dz
\\[3mm]
\displaystyle
-\frac{\varkappa(0)}{\sqrt{\ve}} \int\limits_{\widetilde{\Omega}_\ve} v_p(z)\, v_q(z)\, (\chi_\ve(z))^2\, dz
\\[3mm]
\displaystyle
+ \ve^{1/4}\, \int \limits_{\Sigma_\ve} q(\ve^{1/4}) v_p(z)\,v_q(z)\, (\chi_\ve(z))^2 \, d\sigma_z - \delta_{pq}\bigg| \le C\, \ve^{1/4}.
\end{array}
$$
Combining the last two estimates, we get
$$
\big|W^\ve({V}_p^\ve, {V}_q^\ve) - \delta_{pq}\big| \le C\, \ve^{1/4},
$$
which is the first estimate in \eqref{norm-cond-V_p^eps}.

The second estimate in \eqref{norm-cond-V_p^eps} follows from the first one and Remark~\ref{remark-equiv-norm}.

\end{proof}

\begin{lemma}
\label{lemma-est-forVishik}
Let $\mathcal{V}_p^\ve \equiv V_p^\ve/ \|V_p^\ve\|_{\ve,W}$ with $V_p^\ve$ defined by \eqref{appr-sol-V^eps}. Then the following estimate holds:
\begin{equation}\label{est_l_38}
\|G^\ve \mathcal{V}_p^\ve - (\mu_j)^{-1} \, \mathcal{V}_p^\ve\|_{\ve, W} \le C_p\, \ve^{1/4}, \quad p = i, \cdots, i + \kappa_j -1.
\end{equation}
\end{lemma}
\begin{proof}
Simple transformations result in the following relations:
$$
\|G^\ve \mathcal{V}_p^\ve - (\mu_j)^{-1} \, \mathcal{V}_p^\ve\|_{\ve, W}
= \|V_p^\ve\|_{\ve, W}^{-1} \, \sup \limits_{\|w\|_{\ve,W}=1} \,\, W^\ve\big(G^\ve V_p^\ve - (\mu_j)^{-1} \,V_p^\ve, w\big)
$$
By \eqref{operator-G^eps},
$$
\ba{l}
\disp
\|G^\ve \mathcal{V}_p^\ve - (\mu_j)^{-1} \, \mathcal{V}_p^\ve\|_{\ve, W}
= \frac{1}{\mu_p}\, \|V_p^\ve\|_{\ve, W}^{-1} \, \sup \limits_{\|w\|_{\ve,W}=1}
\big\{\mu_p (V_p^\ve, w)_{L^2(\widetilde{\Omega_\ve})}
\cr\cr
\disp
-\int \limits_{\widetilde{\Omega_\ve}} a^\ve \nabla V_p^\ve\cdot \nabla w\, dz
+ \frac{\varkappa(0)}{\sqrt{\ve}}\, \int \limits_{\widetilde{\Omega_\ve}}V_p^\ve\, w\, dz
- \ve^{1/4}\, \int \limits_{\widetilde{\Sigma_\ve}} q(\ve^{1/4}z)\, V_p^\ve\, w\, d\sigma_z\big\}
\cr\cr
\disp
= \frac{1}{\mu_p}\, \|V_p^\ve\|_{\ve, W}^{-1} \, \sup \limits_{\|w\|_{\ve,W}=1}
\big\{I_1^\ve + I_2^\ve + \ve^{3/4}\,I_3^\ve\big\}.
\ea
$$
Here
$$
\ba{l}
\disp
I_1^\ve = \mu_p\, \int \limits_{\widetilde{\Omega_\ve}} \, \chi_\ve(z)\,v_p(z)\,w(z) \,dz
- \frac{1}{\sqrt{\ve}} \,\int \limits_{\widetilde{\Omega_\ve}} (\varkappa(\ve^{1/4}z) - \varkappa(0))\,\chi_\ve(z)\, v_p(z)\, w(z)\, dz
\cr\cr
\disp
- \int \limits_{\widetilde{\Omega_\ve}} a(\zeta)\Big(\nabla v_p(z) + \nabla_\zeta(N(\zeta)\cdot \nabla v_p(z))\big)\cdot \nabla w \, \chi_\ve(z) \big|_{\zeta = z/\ve^{3/4}}\, dz;
\ea
$$
\medskip
$$
I_2^\ve = \frac{1}{\sqrt{\ve}} \,\int \limits_{\widetilde{\Omega_\ve}} \varkappa(\ve^{1/4}z)\, v_p(z)\,\chi_\ve(z)\, w(z)\, dz
- \ve^{1/4}\, \int \limits_{\widetilde{\Sigma_\ve}} q(\ve^{1/4}z)\,v_p(z)\,\chi_\ve(z)\, w(z)\, d\sigma;
$$
$$
\ba{l}
\disp
I_3^\ve = \mu_p\, \int \limits_{\widetilde{\Omega_\ve}} \chi_\ve(z)\,N\big(\frac{z}{\ve^{3/4}}\big)\cdot \nabla v_p(z)\,w(z) \,dz
\cr\cr
\disp
-\int \limits_{\widetilde{\Omega_\ve}} a(\zeta)\nabla \chi_\ve(z)\cdot \nabla w\, v_p(z)\big|_{\zeta = z/\ve^{3/4}}\, dz
\cr\cr
\disp
- \int \limits_{\widetilde{\Omega_\ve}} a(\zeta)\nabla_z(\chi_\ve(z)\, N(\zeta)\cdot \nabla v_p(z))\cdot \nabla w\,\big|_{\zeta = z/\ve^{3/4}}\, dz
\cr\cr
\disp
+ \ve^{1/4}\, \varkappa(0)\, \int \limits_{\widetilde{\Omega_\ve}} \chi_\ve(z)\, N\big(\frac{z}{\ve^{3/4}}\big)\cdot \nabla v_p(z)\, w(z)\, dz
\cr\cr
\disp
- \ve^{1/4}\, \int \limits_{\widetilde{\Sigma_\ve}} q(\ve^{1/4}z)\, \chi_\ve(z)\, N\big(\frac{z}{\ve^{3/4}}\big)\cdot \nabla v_p(z)\, w\, d\sigma.
\ea
$$
Integrating by parts in the last integral in $I_1^\ve$, taking into account \textbf{(H4)}, \eqref{eq-N} and \eqref{chi_eps(x)}, we obtain
$$
\ba{l}
\disp
I_1^\ve = \mu_p\, \int \limits_{\widetilde{\Omega_\ve}} \, \chi_\ve(z)\,v_p(z)\,w(z) \,dz
- \int \limits_{\widetilde{\Omega_\ve}} (z^T Q z)\,\chi_\ve(z)\, v_p(z)\, w(z)\, dz
\cr\cr
\disp
+ \int \limits_{\widetilde{\Omega_\ve}} {\rm div}_z\big(a(\zeta)(I + \nabla_\zeta N(\zeta))\nabla v_p(z)\big)\big|_{\zeta = z/\ve^{3/4}}\, w(z) \, \chi_\ve(z) \, dz + O(\ve^{1/4}), \quad \ve \to 0.
\ea
$$
Here we have also used Lemma~\ref{lemma-norm-equiv} and the fact that $\|w\|_{\ve, W}=1$.

Bearing in mind the definition of the effective diffusion \eqref{a^eff} and \eqref{eff-prob}, by virtue of Lemma~\ref{lemma-mean-value}, one has
\be
\label{7}
|I_1^\ve|\le C\, \ve^{1/4}\, \|w\|_{H^1(\mathbb{R}^d)}.
\ee
By Lemma~\ref{lemma-1-bis},
\be
\label{8}
|I_2^\ve| \le C_2\, \ve^{1/4}\,\|v_p\|_{H^1(\mathbb{R}^d)}\, \|w\|_{H^1(\mathbb{R}^d)}.
\ee
Using the boundedness of $a_{ij}$ and the regularity properties of $N, v_p, \chi_\ve$, one can show that
\be
\label{9}
|I_3^\ve| \le C_3\, \|\nabla v_p\|_{L^2(\mathbb{R}^d)}\, \|w\|_{H^1(\mathbb{R}^d)}.
\ee
Using Lemma~\ref{lemma-orthonor-V^eps} we see that for small enough $\ve$,
\be
\label{11}
\|V_p^\ve\|_{\ve,W}^2 \ge \frac{1}{2}.
\ee
Finally, combining \eqref{7}-\eqref{11} we obtain the desired estimate (\ref{est_l_38}).
Lemma~\ref{lemma-est-forVishik} is proved.
\end{proof}
By Lemma~\ref{lemma Vishik}, in view of the estimate obtained in Lemma~\ref{lemma-est-forVishik}, for any eigenvalue $\mu_j$ of the effective problem \eqref{eff-prob} there exists an eigenvalue of the original problem such that
\be
\label{est-eigenv-1}
|\mu_{q(j)}^\ve - \mu_j|\le C_j\, \ve^{1/4},
\ee
where $q(j)$ might depend on $\ve$.

Moreover, letting $\delta_1$ in the statement of Lemma~\ref{lemma Vishik} be equal to $\Theta_j \, \ve^{1/4}$ (the constant $\Theta_j$ will be chosen below), we conclude that there exists a $K_j(\ve) \times {\kappa}_j$ constant matrix $\alpha^\ve$ such that
\begin{equation}
\label{14}
\bigg\|\mathcal{V}_p^\ve - \sum \limits_{k = J_j}^{J_j + K_J(\ve) - 1} \alpha_{kp}^\ve \, v_k^\ve \bigg\|_{\ve, W} \leq
2 {C\ve^{1/4} \over \delta_1} \leq C_j (\Theta_j)^{-1}\,  \quad p = j, \cdots, j+ {\kappa}_j - 1,
\end{equation}
here $\mu_k^\ve$, $k=J_j(\ve),\dots,J_j(\ve)+K_j(\ve)-1$, are all the  eigenvalues of operator $(G^\ve)^{-1}$ which satisfy the estimate
\be
\label{12}
|\mu_k^\ve - \mu_j| \leq \Theta_j \, \ve^{1/4}.
\ee
Since the eigenvalues $\mu_j$ do not depend on $\ve$, one can choose  constants $\ve_j>0$ so that the intervals $(\mu_j -\Theta_{j}\ve^{1/4}\,,\, \mu_j+\Theta_{j}\ve^{1/4})$ and $(\mu_i -\Theta_{i}\ve^{1/4}\,,\, \mu_i+\Theta_{i}\ve^{1/4})$  do not intersect
if $\mu_j\not=\mu_i$ and $\ve<\min(\ve_j,\ve_i)$.  Then the sets of eigenvalues $\{\mu_k^\ve\}$ related to different $\mu_j$ in (\ref{12}) do not intersect for sufficiently small $\ve$.

In the following statement we prove that $K_J(\ve) \geq {\kappa}_j$.
\begin{lemma}
\label{lemma-matrix-alpha^eps}
The columns of the matrix $\alpha^\ve$, that is the vectors $\{\alpha_{\cdot p}^\ve\}_{p = J(j)}^{J(j) + {\kappa}_j - 1}$ of length $K_J(\ve)$ are linearly independent. As a consequence, $K_J(\ve) \geq {\kappa}_j$.
\end{lemma}
\begin{proof}
A simple transformation gives
$$
\begin{array}{c}
W^\ve(\mathcal{V}_p^\ve, \mathcal{V}_q^\ve) =
W^\ve\Big(\mathcal{V}_p^\ve - \sum \limits_{k=J_j}^{J_j+ K_J(\ve) - 1} \alpha_{kp}^\ve v_k^\ve \, , \, \mathcal{V}_p^\ve\Big)+
\\[5mm]
+ W^\ve\Big( \sum \limits_{k=J_j}^{J_j+ K_J(\ve) - 1} \alpha_{kp}^\ve v_k^{\ve,\pm}  \, , \, \mathcal{V}_q^\ve - \sum \limits_{k=J_j}^{J_j+ K_J(\ve) - 1} \alpha_{kq}^\ve v_k^\ve\Big) +
\sum \limits_{k=J_j}^{J_j+ K_J(\ve) - 1} \alpha_{kp}^\ve \, \alpha_{kq}^\ve.
\end{array}
$$
Taking estimates \eqref{norm-cond-V_p^eps} and \eqref{14} into account, we obtain
$$
\Big| \sum \limits_{k=J_j}^{J_j+ K_J(\ve) - 1} \alpha_{kp}^\ve \, \alpha_{kq}^\ve -
\delta_{p,q} \Big| \leq C \, \Theta_j^{-1},\quad
p,q = J(j), \cdots, J(j)+ {\kappa}_j - 1,
$$
and, in other words,
\begin{equation}
\label{matrix-alpha^eps}
\big| (\alpha_{\cdot p}^\ve)^T \, \alpha_{\cdot q}^\ve - \delta_{p,q}\big| \leq C \, \Theta_j^{-1}, \quad
p,q = J(j), \cdots, J(j)+ {\kappa}_j - 1,
\end{equation}
where $\alpha_{\cdot p}^\ve$ denotes a $p$th column in the matrix $\alpha^\ve$. The last inequality means that the vectors $\{\alpha_{\cdot p}^\ve\}_{p = J(j)}^{J(j) + {\kappa}_j - 1}$ are asymptotically orthonormal, as $\Theta_j$ grows to infinity. This property implies the linear independence of the vectors $\{\alpha_{\cdot p}^\ve\}$ for sufficiently large $\Theta_j$. Indeed, assume that $\{\alpha_{\cdot p}^\ve\}_{p = J(j)}^{J(j) + {\kappa}_j - 1}$ are not linearly independent. Then there exist constants $c_{J(j)}, \cdots, c_{J(j) + {\kappa}_j -1}$ such that
$$
\sum \limits_{k=J(j)}^{J(j) + {\kappa}_j -1} c_k \, \alpha_{\cdot k}^\ve = 0.
$$
Without loss of generality we assume that $c_{J(j)} =1\ge \max_{k} |c_k|$. Then
$$
\alpha_{\cdot,J(j)}^\ve + \sum \limits_{k>J(j)} c_k \, \alpha_{\cdot k}^\ve = 0.
$$
Multiplying the last equality by $\alpha_{\cdot,J(j)}^\ve$ and using \eqref{matrix-alpha^eps} we obtain the inequality
$$
\big| (\alpha_{\cdot,J(j)}^\ve)^T \, \alpha_{\cdot,J(j)}^\ve \big| \leq C_j \, \Theta_j^{-1},
$$
that contradicts \eqref{matrix-alpha^eps} if $\Theta_j^{-1}$ is sufficiently small. Thus, the vectors $\{\alpha_{\cdot p}^\ve\}_{p = J(j)}^{J(j) + {\kappa}_j - 1}$ of length $K_J(\ve)$ are linearly independent. Obviously, it is possible only in the case $K_J(\ve) \geq {\kappa}_j$.
\end{proof}

\begin{lemma}
\label{lemma-bound-mu^eps}
For any $q$, $0 < m \le \mu_q^\ve \le M_q$.
\end{lemma}
\begin{proof}
The estimate from below is the immediate consequence of the boundedness of the operator $G^\ve$ constructed in Proposition~\ref{prop-spectr-rescaled}.

To obtain an upper bound for $\mu_q^\ve$, we recall estimate \eqref{est-eigenv-1}. For any $j$, there exists an eigenvalue of problem \eqref{rescaled-prob} converging to the $j$th eigenvalue of the effective problem. Namely, the estimate holds
$$
|\mu_{q_\ve(j)}^\ve - \mu_{J(j)}| \le C_j\, \ve^{1/4},
$$
where $J(j)=\min\{i\in \mathbb Z^+\,:\,\mu_i=\mu_j\}$. Obviously, $q_\ve(j) \ge J(j)$. Thus,
$$
\mu_{J(j)}^\ve \le \mu_{q_\ve(j)}^\ve \le \mu_{J(j)} + C_j\, \ve^{1/4}
$$
that implies the desired bound.
\end{proof}
Our next goal is to prove that any accumulation point of the sequence $\mu_q^\ve$, as $\ve \to 0$, is an eigenvalue of \eqref{eff-prob}.
\begin{lemma}
\label{lemma-3}
If, up to a subsequence, $\mu_q^\ve \to \mu^\ast$, as $\ve \to 0$, then $\mu^\ast$ is an eigenvalue of the effective spectral problem \eqref{eff-prob}. \end{lemma}
\begin{proof}
Since $\mu_q^\ve$ is bounded, then
$$
\|v_q^\ve\|_{\ve, W}\le C_q
$$
with $\|\cdot\|_{\ve, W}$ defined in (\ref{au_norms}).
In view of Lemmata~\ref{lemma-norm-equiv} and \ref{lemma-compact}, the eigenfunction $v_q^\ve$ (extended to the whole $\mathbb{R}^d$) converges weakly in $H^1(\mathbb{R}^d)$ and strongly in $L^2(\mathbb{R}^d)$ to some function $v^\ast$. To prove that $(\mu^\ast, v^\ast)$ is an eigenpair of the effective problem \eqref{eff-prob}, we pass to the limit in the integral identity \eqref{weak-rescal-prob}. Using standard two-scale convergence arguments we obtain
$$
\int \limits_{\mathbb{R}^d} a^\eff\nabla v^\ast \cdot \nabla w\, dz +
\int \limits_{\mathbb{R}^d} (z^T Q z)\, v^\ast\, w\, dz = \mu^\ast \,
\int\limits_{\mathbb{R}^d} v^\ast\, w\, dz, \quad w \in H^1(\mathbb{R}^d).
$$
The last equality is the weak formulation of \eqref{eff-prob}.
Since $\mu_q^\ve\to\mu^\ast$, as $\ve\to0$, then considering (\ref{weak-rescal-prob}) and (\ref{norm-cond-v^eps}) we conclude that $\lim\limits_{\ve\to0} \|v_q^\ve\|^2_{L^2(\widetilde\Omega_\ve)}=\mu^\ast$.
Using the strong convergence of $v_q^\ve$ in $L^2(\mathbb{R}^d)$, we see that $\|v^\ast\|^2_{L^2(\mathbb{R}^d)}\geq\mu^\ast\bigg.$.
By Lemma \ref{lemma-est-mu^eps} we have $\mu^\ast>0$. Therefore,
$v^\ast\neq 0$.
This completes the proof.
\end{proof}
\begin{lemma}
\label{lemma-multiplicity}
Let $\mu_j$ be the $j$th eigenvalue of problem \eqref{eff-prob} of multiplicity $\kappa_j$, that is $\mu_j=\mu_{j+1}=\cdots=\mu_{j+\kappa_j-1}$. Then there exist exactly $\kappa_j$ eiegnvalues of the original problem \eqref{or-prob} converging to it.
\end{lemma}
\begin{proof}
First, we prove that there are not more than $\kappa_j$ eigenvalues of problem \eqref{rescaled-prob} converging to $\mu_j$. Assume that there exist $\kappa_j+1$ eigenvalues $\mu_{J_k^\ve(j)}$ such that
$$
\mu_{J_k^\ve(j)} \to \mu_j, \quad k = 1, \cdots, \kappa_j + 1.
$$
By Lemma~\ref{lemma-3}, the corresponding eigenfunctions $v_{J_k^\ve(j)}$, extended to the whole $\mathbb{R}^d$, converge weakly in $H^1(\mathbb{R}^d)$ and strongly in $L^2(\mathbb{R}^d)$ to the eigenfunctions $v_k^\ast$ of the effective problem \eqref{eff-prob}, $k = 1, \cdots, \kappa_j+1$.
Passing to the limit in the normalization condition \eqref{norm-cond-v^eps} yields
$$
(v_i^\ast, v_k^\ast)_{L^2(\mathbb{R}^d)} = \frac{1}{|Y|_d}\mu_i\delta_{ik}, \quad i,k=1, \cdots, \kappa_j+1.
$$
Therefore, eigenfunctions $\{v_k^\ast\}_{k=1}^{\kappa_j + 1}$ corresponding to $\mu_j$ are linearly independent. Recalling that the multiplicity of $\mu_j$ is $\kappa_j$, we arrive at contradiction. Thus, there are not more than $\kappa_j$ eigenvalues of problem \eqref{rescaled-prob} converging to $\mu_j$.

On the other hand, by Lemma~\ref{lemma-matrix-alpha^eps}, there exist at least $\kappa_j$ eigenvalues of \eqref{rescaled-prob} converging to $\mu_j$ of multiplicity $\kappa_j$.
\ Lemma~\ref{lemma-multiplicity} is proved.
\end{proof}
Combining Lemmata~\ref{lemma-bound-mu^eps}--\ref{lemma-multiplicity} completes the proof of the first statement of Theorem~\ref{Th-main}.

We turn to the proof of the second statement in Theorem~\ref{Th-main}.

First of all, let us notice that the orthogonality and normalization condition \eqref{orthog-cond-ansatz} follows directly from Lemma~\ref{lemma-orthonor-V^eps} and the exponential decay of $v_k(z)$ as eigenfunctions of the harmonic oscillator.

In order to prove estimate \eqref{13}, we recall the estimate obtained in Lemma~\ref{lemma-est-forVishik} and apply the estimate in Lemma~\ref{lemma Vishik} with $\delta_1 = c_j$, $c_j$ being a sufficiently small constant. This estimate reads
$$
\bigg\|\mathcal{V}_p^\ve - \sum \limits_{\mu_k^\ve \in S(j,\ve)} \alpha_{kp}^\ve \, v_k^\ve \bigg\|_{\ve, W} \leq
2 {C\ve^{1/4} \over \delta_1} \leq C_j \ve^{1/4}\,  \quad p = j; \cdots, j+ {\kappa}_j - 1,
$$
where $S(j,\ve)$ is the set of eigenvalues  $\mu_k^\ve$ satisfying the estimate
$$
|\mu_k^\ve - \mu_j| \leq c_j;
$$
the constant matrix $\alpha^\ve$ is such that
\begin{equation}
\label{16}
\big| (\alpha_{\cdot p}^\ve)^T \, \alpha_{\cdot q}^\ve - \delta_{p,q}\big| \leq C_j \, \ve^{1/4}, \quad
p,q = J(j), \cdots, J(j)+ {\kappa}_j - 1,
\end{equation}
From the first statement of Theorem~\ref{Th-main} we deduce that the set $S(j,\ve)$ coincides with the set of eigenvalues $\{\mu_k^\ve\}_{J(j)}^{J(j) + \kappa_j -1}$, for sufficiently small $\ve$.
Therefore,
\begin{equation}
\label{15}
\bigg\|\mathcal{V}_p^\ve - \sum \limits_{k = J(j)}^{J(j)+\kappa_j-1} \alpha_{kp}^\ve \, v_k^\ve \bigg\|_{\ve, W}
\leq C_j \ve^{1/4}\,  \quad p = j; \cdots, j+ {\kappa}_j - 1,
\end{equation}
with a constant $\kappa_j \times \kappa_j$ matrix $\alpha^\ve$ which satisfies inequality \eqref{16}.

It remains to use the following simple statement.
\begin{lemma}
\label{lemma almost unitary matrix}
For any $n \times n$ matrix $A$ satisfying an equality
$$
\|A^T A - \mathbb{I}\|_{\mathcal{L}(\mathbb{R}^n,\mathbb{R}^n)} = \gamma \in (0,1),
$$
there exists a unitary matrix $B$  such that
$$
\|A B - \mathbb{I}\|_{\mathcal{L}(\mathbb{R}^n,\mathbb{R}^n)} \leq \gamma;
$$
here $\mathbb{I}$ is a unit matrix, and
$$
\|D\|_{\mathcal{L}(\mathbb{R}^n,\mathbb{R}^n)} = \sup \limits_{{\xi \in \mathbb{R}^n} \atop {\|\xi\| = 1}} \|D \xi\|.
$$
\end{lemma}
We omit the proof of this lemma which can be found in \cite{Naz2002}.
According to \eqref{16} and Lemma~\ref{lemma almost unitary matrix}, there exists a unitary $\kappa_j\times\kappa_j$ matrix $\beta^\ve$ such that
\begin{equation}
\label{alpha-to-beta}
\|\alpha^\ve \, \beta^\ve - \mathbb{I} \|_{\mathcal{L}(\mathbb{R}^{{\kappa}_j}, \mathbb{R}^{{\kappa}_j})} \leq C_j \ve^{1/4}.
\end{equation}
Taking into account Lemma~\ref{lemma-orthonor-V^eps}, estimates \eqref{15}, \eqref{alpha-to-beta}, one can show that
$$
\bigg\|v_p^\ve - \sum \limits_{k = J(j)}^{J(j) + \kappa_j - 1} \beta_{kp}^\ve \, V_k^\ve \bigg\|_{\ve, W} \leq
C_j \, \ve^{1/4}, \quad p = J(j), \cdots, J(j)+ \kappa_j - 1.
$$
Due to the exponential decay of the eigenfunctions $v_k(z)$ defined in \eqref{eff-prob}, one can replace $V_k^\ve$ defined by \eqref{appr-sol-V^eps} with \eqref{anzats-without-cut-off}.
Then, by Lemma~\ref{lemma-norm-equiv}, similar estimate holds for $\|\cdot\|_{\ve, Q}$ norm.
Theorem~\ref{Th-main} is proved.
\end{proof}
\bigskip

Bearing in mind the result obtained in Theorem~\ref{Th-main}, we formulate the main result of the present paper characterizing the asymptotic behaviour of eigenpairs $(\lambda_j^\ve, u_j^\ve)$ of problem \eqref{or-prob}.
\begin{theorem}
\label{Th-main-full}
Let conditions \textbf{(H1)-(H4)} be fulfilled. If $(\lambda_j^\ve, u_j^\ve)$ stands for the $j$th eigenpair of problem \eqref{or-prob}, then
for any $j$, the following representation takes place:
$$
\lambda_j^\ve = \frac{1}{\ve}\, \frac{|\Sigma^0 |_{d-1}}{|Y|_d}\, q(0) + \frac{\mu_j^\ve}{\sqrt{\ve}},
\quad u_j^\ve(x) = v_j^\ve \big(\frac{x}{\ve^{1/4}}\big),
$$
where the eigenpairs $(\mu_j^\ve, v_j^\ve(z))$ of problem \eqref{rescaled-prob} are such that
\begin{enumerate}
\item
For each $j=1,2,\dots,$ there exist $\ve_j > 0$ and a constant $c_j$
such that
$$
|\mu_j^\ve - \mu_j| \leq c_j \, \ve^{1/4}, \quad \ve \in (0, \ve_j),
$$
where $\mu_j$ is an eigenvalue of the harmonic oscillator operator \eqref{eff-prob}.
\item
Let $\mu_j$ be an eigenvalue of \eqref{eff-prob} of multiplicity $\kappa_j$, that is $\mu_j = \cdots = \mu_{j+\kappa_j-1}$.
Then, there exists a unitary $\kappa_j \times \kappa_j$ matrix $\beta^\ve$ such that
$$
\bigg\|v_p^\ve - \sum \limits_{k = J(j)}^{J(j) + \kappa_j - 1} \beta_{kp}^\ve \, \widetilde{V}_k^\ve \bigg\|_{\ve, Q} \leq
C_j \, \ve^{1/4}, \quad p = J(j), \cdots, J(j)+ \kappa_j - 1,
$$
where
$$
\widetilde{V}_k^\ve = v_k(z) + \ve^{3/4} \, N\big({z\over {\ve^{3/4}}}\big) \cdot \nabla v_k(z).
$$
Here the vector function $N(\zeta)$ solve problem \eqref{eq-N}; eigenfunctions $v_k(z)$ of the limit problem are defined in \eqref{eff-prob}; the norm $\|\cdot\|_{\ve,Q}$ is defined in (\ref{au_norms}).
\end{enumerate}
\end{theorem}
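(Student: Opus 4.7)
The plan is to reduce the statement to Theorem~\ref{Th-main}, which already covers the rescaled problem \eqref{rescaled-prob}. The two theorems are in fact connected by the change of unknowns \eqref{change-var} performed in Subsection~2.2: setting $z = x/\ve^{1/4}$, $v^\ve(z) = u^\ve(\ve^{1/4}z)$ and $\mu^\ve = \sqrt{\ve}(\lambda^\ve - \varkappa(0)/\ve)$ transforms \eqref{or-prob} into \eqref{rescaled-prob}, and this transformation is a bijection on the respective Sobolev spaces. In particular, it maps the eigenpairs of \eqref{or-prob} one-to-one onto those of \eqref{rescaled-prob}, preserving multiplicities and the natural ordering of eigenvalues. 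Hence the representation
$$
\lambda_j^\ve = \frac{1}{\ve}\, \frac{|\Sigma^0|_{d-1}}{|Y|_d}\, q(0) + \frac{\mu_j^\ve}{\sqrt{\ve}}, \qquad u_j^\ve(x) = v_j^\ve\!\left(\frac{x}{\ve^{1/4}}\right),
$$
holds with $(\mu_j^\ve, v_j^\ve)$ being exactly the $j$th eigenpair of \eqref{rescaled-prob}.

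The first assertion of Theorem~\ref{Th-main-full}, i.e. the bound $|\mu_j^\ve - \mu_j| \le c_j\,\ve^{1/4}$, is then nothing but the first statement of Theorem~\ref{Th-main}. No additional argument is needed beyond recalling that the two spectra are identified by the linear affine relation above, so the ordering and multiplicities pass through unchanged.

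For the second assertion, one applies the second statement of Theorem~\ref{Th-main} to get, with the unitary matrix $\beta^\ve$ produced there,
$$
\Bigl\| v_p^\ve - \sum_{k=J(j)}^{J(j)+\kappa_j-1} \beta_{kp}^\ve\, \widetilde{V}_k^\ve \Bigr\|_{\ve,Q} \le C_j\, \ve^{1/4},
$$
where $\widetilde V_k^\ve$ is defined in \eqref{anzats-without-cut-off}. This is exactly the inequality claimed in Theorem~\ref{Th-main-full}. Multiplicity of $\mu_j$ is preserved by the change of unknowns, so the index set $\{J(j),\dots,J(j)+\kappa_j-1\}$ labels the right eigenfunctions of the original problem as well.

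The only point that requires a brief verification is that the affine relation between $\lambda^\ve$ and $\mu^\ve$ respects the ordering of the spectra, i.e. that the $j$th eigenvalue of \eqref{or-prob} corresponds to the $j$th eigenvalue of \eqref{rescaled-prob}; this is immediate because the map $\lambda \mapsto \sqrt{\ve}(\lambda - \varkappa(0)/\ve)$ is strictly increasing for fixed $\ve$. Apart from this bookkeeping, the theorem is a direct corollary of Theorem~\ref{Th-main}, so there is no genuine new obstacle.
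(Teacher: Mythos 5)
Your proposal is correct and matches the paper's own reasoning: the paper states Theorem~\ref{Th-main-full} immediately after proving Theorem~\ref{Th-main}, treating it as a direct restatement obtained via the change of unknowns \eqref{change-var}, exactly as you do. Your remark that the affine map $\lambda\mapsto\sqrt{\ve}(\lambda-\varkappa(0)/\ve)$ is increasing and hence preserves the ordering and multiplicities of the spectra is the only bookkeeping point to check, and you handle it correctly.
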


\section{Auxiliary results}
\begin{lemma}
\label{lemma-1}
For any $w^\ve(x)\in H_0^1(\Omega_\ve, \partial \Omega)$ the following estimate holds
$$
\Big|\frac{1}{\ve}\,\frac{|\Sigma^0|_{d-1}}{|Y|_d}\, \int \limits_{\Omega_\ve} |w^\ve|^2\, dx - \int \limits_{\Sigma_\ve} |w^\ve|^2\, d\sigma \Big|
\le C\, \|w^\ve\|_{L^2(\Omega_\ve)}\, \|\nabla w^\ve\|_{L^2(\Omega_\ve)}
$$
with a constant $C$ independent of $\ve$.
\end{lemma}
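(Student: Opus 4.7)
The plan is to reduce the inequality to a cell-by-cell analysis on the reference cell $Y$ via rescaling, and then to sum over the cells contained in $\Omega$ after carefully handling the thin boundary strip where the perforation does not fit. The very shape of the right-hand side, $\|w^\ve\|_{L^2}\|\nabla w^\ve\|_{L^2}$, strongly suggests a divergence-theorem identity of the form
$$\int_{\Sigma^0}|u|^2\, d\sigma - \frac{|\Sigma^0|_{d-1}}{|Y|_d}\int_Y |u|^2\, dy = 2\int_Y u\,\nabla u\cdot\Phi\, dy$$
for a suitable bounded vector field $\Phi$ on $Y$, after which Cauchy--Schwarz yields the desired scaling.

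The local identity is set up as follows. I would choose $\Phi \in L^\infty(Y; \mathbb{R}^d)$ satisfying $\operatorname{div}\Phi = |\Sigma^0|_{d-1}/|Y|_d$ in $Y$, $\Phi\cdot n = 1$ on $\Sigma^0$, and $\Phi\cdot n = 0$ on $\partial K\cap\overline{Y}$ (the cube-face portions of $\partial Y$). Such a $\Phi$ can be produced as $\nabla\phi$, with $\phi$ a solution of a Neumann problem whose compatibility condition $\int_Y\operatorname{div}\Phi = \int_{\partial Y}\Phi\cdot n$ precisely forces the constant $|\Sigma^0|_{d-1}/|Y|_d$ to appear; Lipschitz regularity of $\partial Y$ gives $\|\Phi\|_{L^\infty(Y)}\le C$. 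Applying the divergence theorem to $|u|^2\Phi$ for $u\in H^1(Y)$, the selected boundary values kill the contribution from $\partial K\cap\overline Y$ and leave exactly the identity above; Cauchy--Schwarz then bounds the right-hand side by $2\|\Phi\|_{L^\infty}\|u\|_{L^2(Y)}\|\nabla u\|_{L^2(Y)}$.

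For the global estimate, I would rescale by $u(y) = w^\ve(\ve(i+y))$ on each cell $Y_\ve^i$ with $i\in I_\ve$; taking into account the Jacobians $\ve^d$ and $\ve^{d-1}$, the local identity becomes
$$\int_{\Sigma_\ve^i}|w^\ve|^2\, d\sigma = \frac{1}{\ve}\frac{|\Sigma^0|_{d-1}}{|Y|_d}\int_{Y_\ve^i\cap\Omega_\ve}|w^\ve|^2\, dx + 2\int_{Y_\ve^i\cap\Omega_\ve} w^\ve\,\nabla w^\ve \cdot \Phi^\ve\, dx,$$
with $\Phi^\ve$ the piecewise-defined, uniformly bounded field. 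Summing over $i\in I_\ve$ and applying Cauchy--Schwarz to the bulk term gives the desired inequality but with $\Omega_\ve^0 := \bigcup_{i\in I_\ve}(Y_\ve^i\cap\Omega_\ve)$ in place of $\Omega_\ve$. The missing contribution sits on the remainder $R_\ve := \Omega_\ve\setminus\Omega_\ve^0$, an $O(\ve)$-thick strip adjacent to $\partial\Omega$. The singular factor $\ve^{-1}\|w^\ve\|_{L^2(R_\ve)}^2$ is tamed by the Poincar\'e-type inequality $\|w^\ve\|_{L^2(R_\ve)}\le C\ve\|\nabla w^\ve\|_{L^2(R_\ve)}$, which is available precisely because $w^\ve = 0$ on $\partial\Omega$; combined with Cauchy--Schwarz it gives $\ve^{-1}\|w^\ve\|_{L^2(R_\ve)}^2 \le C\|w^\ve\|_{L^2(R_\ve)}\|\nabla w^\ve\|_{L^2(R_\ve)}$, which merges into the right-hand side. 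The main technical points I expect to require care are (i) fixing the correct constant $|\Sigma^0|_{d-1}/|Y|_d$, which is forced by the compatibility condition on $\Phi$ and is the reason the statement has exactly this coefficient, and (ii) verifying that the Poincar\'e constant on $R_\ve$ is genuinely linear in $\ve$ uniformly in $\ve$, which rests on Lipschitz regularity of $\partial\Omega$ (so that the normal-direction 1D Poincar\'e bound can be applied locally) together with the homogeneous Dirichlet condition.
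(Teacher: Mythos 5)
Your argument is correct, and its central mechanism coincides with the paper's: a vector field on $Y$ with constant divergence $|\Sigma^0|_{d-1}/|Y|_d$ and unit normal trace on $\Sigma^0$, combined with the divergence theorem and Cauchy--Schwarz. The paper takes the shortest path: it poses a $Y$-periodic field $\chi$ solving $-{\rm div}_y\chi = |\Sigma^0|_{d-1}/|Y|_d$ in $Y$ with $\chi\cdot n = -1$ on $\Sigma^0$, multiplies the rescaled identity $-\ve\,{\rm div}_x\chi(x/\ve) = |\Sigma^0|_{d-1}/|Y|_d$ by $|w^\ve|^2$, and integrates by parts once over $\Omega_\ve$; the Dirichlet condition on $\partial\Omega$ annihilates the only extra boundary term, and Cauchy--Schwarz finishes. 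You reach the same endpoint through a cell-by-cell sum plus a separate Poincar\'e estimate on the boundary strip $R_\ve$. That detour is in principle avoidable: because you impose $\Phi\cdot n = 0$ on the cube faces, the periodic extension of $\Phi$ has matching normal traces across cell interfaces, so it lies in $H({\rm div})$ on $\ve E$ and the single global integration by parts over $\Omega_\ve$ is equally available to you. On the other hand, your explicit strip estimate is a genuine plus: $\Omega_\ve$ does contain pieces of unremoved holes $B_\ve^j$ with $j\notin I_\ve$ in the $O(\ve)$-neighbourhood of $\partial\Omega$, and there neither $\chi(x/\ve)$ nor $\Phi(x/\ve)$ satisfies the constant-divergence equation; this small error sits exactly in $R_\ve$ and must indeed be absorbed by a Poincar\'e bound of the type you invoke. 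The one spot to tighten is the appeal to a ``normal-direction 1D Poincar\'e'' on $R_\ve$: a straight normal segment from a point of $R_\ve$ to $\partial\Omega$ may pass through a perforated cell, so the cleanest route is to extend $w^\ve$ to $H_0^1(\Omega)$ via the operator of Remark~\ref{remark-extension} and then apply the strip Poincar\'e in the unperforated $\Omega$, which gives $\|w^\ve\|_{L^2(R_\ve)}\le C\ve\|\nabla w^\ve\|_{L^2(\Omega_\ve)}$ uniformly in $\ve$.
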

\begin{proof}
Introduce a $Y$-periodic vector function $\chi(y)$ as a solution of the following problem on the periodicity cell $Y$:
$$
\left\{
\begin{array}{l}
\displaystyle
-{\rm div}_y \chi = \frac{|\Sigma^0|_{d-1}}{|Y|_d}, \quad y \in Y,
\\[3mm]
(\chi, n)=-1, \quad y \in \Sigma^0.
\end{array}
\right.
$$
Notice that $\chi$ is a smooth function. Then
$$
-\ve\, {\rm div}_x \chi\big(\frac{x}{\ve}\big) = \frac{|\Sigma^0|_{d-1}}{|Y|_d}.
$$
Multiplying the last equality by $|w^\ve|^2$ and integrating by parts over $\Omega_\ve$ yields
$$
\frac{1}{\ve}\,\frac{|\Sigma^0|_{d-1}}{|Y|_d}\, \int \limits_{\Omega_\ve} |w^\ve|^2\, dx - \int \limits_{\Sigma_\ve} |w^\ve|^2\, d\sigma
= \int \limits_{\Omega_\ve} \big(\chi \big(\frac{x}{\ve}\big),\, \nabla |w^\ve|^2\big)\, dx
$$
that easily implies the desired estimate. Lemma is proved.
\end{proof}
\begin{lemma}
\label{lemma-1-bis}
Let $\widetilde{\Omega_\ve} = \ve^{-\alpha}\Omega_\ve, \,\, \widetilde{\Sigma_\ve} = \ve^{-\alpha}\Sigma_\ve$. Then, for $\psi(z)\in H_0^1(\widetilde{\Omega_\ve}, \ve^{-\alpha}\partial \Omega)$ and $\varphi \in C^1(\mathbb{R}^d)$, the following estimate holds
$$
\begin{array}{l}
\displaystyle
\Big|\frac{1}{\ve^{1-\alpha}}\,\frac{|\Sigma^0 |_{d-1}}{|Y|_d}\, \int \limits_{\widetilde{\Omega_\ve}} \varphi(\ve^{\alpha}z)\, |\psi(z)|^2\, dx - \int \limits_{\widetilde{\Sigma_\ve}} \varphi(\ve^{\alpha}z)\, |\psi(z)|^2\,\, d\sigma \Big|
\\[4mm]
\displaystyle
\le C\, \|\psi\|_{L^2(\widetilde{\Omega_\ve})}\,\|\nabla \psi\|_{L^2(\widetilde{\Omega_\ve})}.
\end{array}
$$
with some constant $C$ independent of $\ve$.
\end{lemma}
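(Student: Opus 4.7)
The plan is to mimic the proof of Lemma~\ref{lemma-1}, but carried out directly in the rescaled domain $\widetilde{\Omega_\ve}$ and with a spatially varying weight $\varphi(\ve^\alpha z)$. Let $\chi(y)$ be the same smooth $Y$-periodic cell vector field introduced in Lemma~\ref{lemma-1}, and set $\chi_\ve(z):=\chi(z/\ve^{1-\alpha})$. Then, by the cell problem,
$$
-\ve^{1-\alpha}\,{\rm div}_z\chi_\ve(z)=\frac{|\Sigma^0|_{d-1}}{|Y|_d}\quad\text{in }\widetilde{\Omega_\ve},\qquad \chi_\ve\cdot n=-1\quad\text{on }\widetilde{\Sigma_\ve}.
$$

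\noindent\textbf{Step 1 (integration by parts).} I would multiply this divergence identity by $\varphi(\ve^\alpha z)\,|\psi(z)|^2$ and integrate over $\widetilde{\Omega_\ve}$. Integration by parts produces boundary contributions on $\ve^{-\alpha}\partial\Omega$, which vanish because $\psi=0$ there, and on $\widetilde{\Sigma_\ve}$, which reproduce precisely the surface term $\int_{\widetilde{\Sigma_\ve}}\varphi(\ve^\alpha z)|\psi|^2\,d\sigma$ in the statement. Rearranging, the quantity inside the absolute value equals
$$
2\int_{\widetilde{\Omega_\ve}}\varphi(\ve^\alpha z)\,\psi\,\chi_\ve\cdot\nabla\psi\,dz\;+\;\ve^\alpha\int_{\widetilde{\Omega_\ve}}|\psi|^2\,\chi_\ve\cdot\nabla\varphi(\ve^\alpha z)\,dz,
$$
the $\ve^\alpha$ in the second integral coming from the chain rule on the slow variable.

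\noindent\textbf{Step 2 (bounding the two terms).} The field $\chi$ is smooth and $Y$-periodic, so $\|\chi_\ve\|_{L^\infty}$ is bounded uniformly in $\ve$. For $z\in\widetilde{\Omega_\ve}$ the point $\ve^\alpha z$ lies in $\Omega_\ve\subset\overline{\Omega}$, on which both $\varphi$ and $\nabla\varphi$ are bounded by $\|\varphi\|_{C^1(\overline{\Omega})}$. The first integral is therefore controlled by Cauchy--Schwarz directly as $C\|\psi\|_{L^2(\widetilde{\Omega_\ve})}\|\nabla\psi\|_{L^2(\widetilde{\Omega_\ve})}$.

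\noindent\textbf{Step 3 (absorbing the extra $\ve^\alpha$).} The second integral is a priori only of size $\ve^\alpha\|\psi\|_{L^2(\widetilde{\Omega_\ve})}^2$, so I would absorb the $\ve^\alpha$ via Poincar\'e's inequality in $\widetilde{\Omega_\ve}$: because $\psi=0$ on $\ve^{-\alpha}\partial\Omega$ and $\widetilde{\Omega_\ve}$ has diameter of order $\ve^{-\alpha}$, a standard scaling argument gives $\|\psi\|_{L^2(\widetilde{\Omega_\ve})}\le C\ve^{-\alpha}\|\nabla\psi\|_{L^2(\widetilde{\Omega_\ve})}$ with $C$ depending only on $\Omega$. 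This turns $\ve^\alpha\|\psi\|_{L^2}^2$ into $C\|\psi\|_{L^2}\|\nabla\psi\|_{L^2}$, which matches the desired right-hand side. There is no serious obstacle; the only point requiring care is to verify that the scaling of the Poincar\'e constant exactly cancels the $\ve^\alpha$ produced by differentiating the slow weight, and that all other constants ($\|\chi\|_\infty$, $\|\varphi\|_{C^1(\overline\Omega)}$) are $\ve$-independent so that the final estimate is uniform.
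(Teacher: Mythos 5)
Your proof is correct and follows the same route the paper takes (the paper simply remarks that Lemma~\ref{lemma-1-bis} is proved in the same way as Lemma~\ref{lemma-1}): you build the periodic corrector field $\chi$, rescale it to the $z$ variable, integrate by parts, and then bound the resulting volume integral. The one ingredient genuinely beyond the Lemma~\ref{lemma-1} computation is the handling of the extra term produced by $\nabla\varphi(\ve^\alpha z)$, and you correctly absorb it by the scaled Poincar\'e inequality $\|\psi\|_{L^2(\widetilde{\Omega_\ve})}\le C\ve^{-\alpha}\|\nabla\psi\|_{L^2(\widetilde{\Omega_\ve})}$, whose $\ve^{-\alpha}$ exactly cancels the $\ve^\alpha$ from differentiating the slow weight.
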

Lemma~\ref{lemma-1-bis} is proved in the same way as Lemma~\ref{lemma-1}.

\begin{lemma}
\label{lemma-2}
Suppose two nonnegative functions $f_1, f_2 \in C^3(\bar{B})$, defined on a bounded domain $B$, are such that $x=0$ is the global minimum point for both of them, and $f_1(0)=f_2(0)=0$. Moreover, assume that
$$
H(f_k)(0) \ge \alpha\, I, \quad \alpha >0,
$$
where $H(f_k)$ is the Hessian matrix of $f_k$, $k=1,2$.

Then there exists a constant $C$ such that
$$
C\, f_1 \le f_2 \le C^{-1}\, f_1.
$$
\end{lemma}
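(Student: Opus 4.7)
The plan is to compare both $f_1$ and $f_2$ with the model function $|x|^2$ first locally near the minimum point, then extend the comparison to all of $\bar B$ using compactness.

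First I would perform a Taylor expansion of each $f_k$ at the origin. Since $f_k(0)=0$ and $x=0$ is a minimum, we have $\nabla f_k(0)=0$, so
\begin{equation*}
f_k(x) \;=\; \tfrac12\, x^T H(f_k)(0)\, x \;+\; R_k(x), \qquad |R_k(x)|\le C|x|^3,
\end{equation*}
with a constant $C$ depending on the $C^3$--norm of $f_k$ on $\bar B$. The hypothesis $H(f_k)(0)\ge \alpha I$ then gives $\tfrac12 x^T H(f_k)(0) x\ge \tfrac{\alpha}{2}|x|^2$, while boundedness of the Hessian yields an upper bound $\tfrac12 x^T H(f_k)(0) x\le M|x|^2$. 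Choosing a radius $\rho>0$ small enough that $C\rho\le \alpha/4$, I would conclude
\begin{equation*}
\tfrac{\alpha}{4}\,|x|^2\;\le\; f_k(x)\;\le\; \Big(M+\tfrac{\alpha}{4}\Big)\,|x|^2, \qquad |x|\le\rho,\ k=1,2.
\end{equation*}
On the ball $B_\rho(0)$ this directly implies $f_1(x)/f_2(x)$ is bounded above and below by positive constants, uniformly in $x$.

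Next I would handle the complement $\bar B\setminus B_\rho(0)$. Here I need both functions to be bounded above (trivial by continuity on a compact set) and bounded below by a strictly positive constant. The latter is where I must use that $x=0$ is the unique zero of each $f_k$ on $\bar B$: the condition $f_k\ge 0$ together with $H(f_k)(0)$ being positive definite forces $x=0$ to be an isolated zero, and global positivity outside a neighborhood of $0$ follows from the statement that $0$ is a global minimum with value $0$ and, in the paper's intended application, the only one. Granting this, continuity and compactness give $f_k(x)\ge m_k>0$ on $\bar B\setminus B_\rho(0)$, hence $f_1/f_2$ is again controlled from above and below there.

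Putting the two regions together yields a single constant $C>0$ with $C f_1\le f_2\le C^{-1} f_1$ on $\bar B$. The only genuine subtlety, and the step I regard as the main obstacle, is verifying that $0$ is the sole zero so that the lower bound on $\bar B\setminus B_\rho(0)$ is available; if the lemma is to be read without an explicit uniqueness assumption, one reads it as a local statement in a neighborhood of $0$, where only the Taylor step is needed, and the $C^3$ hypothesis is then used solely to control the cubic remainder $R_k$ uniformly.
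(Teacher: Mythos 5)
Your proof is correct, and it rests on exactly the same facts as the paper's, but the packaging differs. The paper argues by contradiction: assuming a sequence $x_j$ with $f_1(x_j)/f_2(x_j)\to 0$, it uses boundedness of $f_2$ to conclude $f_1(x_j)\to 0$, then compactness of $\bar B$ together with the (tacitly assumed) uniqueness of the zero of $f_1$ to force $x_j\to 0$, and finally the Hessian bound $H(f_1)(0)\ge\alpha I$ (plus the $C^3$-bounded Hessian of $f_2$) to contradict the vanishing of the ratio; the reverse inequality follows by exchanging the roles of $f_1$ and $f_2$. Your argument unfolds the same content directly: a Taylor expansion with cubic remainder control pins down $f_k(x)\asymp|x|^2$ on a small ball $B_\rho(0)$, and compactness handles $\bar B\setminus B_\rho(0)$. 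What your version buys is an explicit, quantitative constant $C$ in terms of $\alpha$, $M$, and the $C^3$-norms, rather than a mere existence statement; what it costs is a bit more writing. You also correctly isolate the one tacit point both arguments share: both need $x=0$ to be the \emph{only} zero of each $f_k$ on $\bar B$ --- the paper's step ``consequently $x_j\to 0$'' uses this just as your lower bound $f_k\ge m_k>0$ on $\bar B\setminus B_\rho(0)$ does. So there is no gap; your reading of the hypothesis (unique global minimum, or else a local statement near $0$) is the intended one.
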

\begin{proof}
Assume that there exists a sequence $x_j \in B$ such that
$$
\frac{f_1(x_j)}{f_2(x_j)} \to 0, \,\, j \to \infty.
$$
Since $f_2$ is bounded, then $f_1(x_j) \to 0$, as $j \to \infty$. And, consequently, $x_j \to 0$, as $j \to \infty$. Recalling that $H(f_1)(0)$ is bounded from below, we arrive at contradiction. Lemma is proved.
\end{proof}

\begin{lemma}
\label{lemma-compact}
\textbf{Compactness result}\\
Denote
$$
H_Q^1(\mathbb{R}^d) = \Big\{w \in H^1(\mathbb{R}^d):\,\, \|v\|_Q^2 = \int \limits_{\mathbb{R}^d} |\nabla v|^2\, dz
+\int \limits_{\mathbb{R}^d} (z^T Q z)\, |v|^2\, dz < \infty \Big\}.
$$
Then $H_Q^1(\mathbb{R}^d)$ is compactly imbedded into $L^2(\mathbb{R}^d)$. In other words, any $\{v_n\} \subset H^1(\mathbb{R}^d)$ such that
$\|v_n\|_Q \le C$, converges strongly along a subsequence in $L^2(\mathbb{R}^d)$.
\end{lemma}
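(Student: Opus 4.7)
The plan is to exploit the positive definiteness of $Q$ to establish uniform tightness of any $H_Q^1$-bounded sequence, and then combine this with the classical Rellich--Kondrachov theorem on balls via a diagonal extraction.

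Let $\{v_n\}\subset H^1(\mathbb{R}^d)$ with $\|v_n\|_Q\leq C$. First I would note that, since $Q$ is positive definite, there is $\lambda_0>0$ such that $z^T Q z\geq \lambda_0|z|^2$ for every $z\in\mathbb{R}^d$. Therefore, for any $R>0$,
$$
\int\limits_{|z|>R} |v_n|^2\,dz \,\leq\, \frac{1}{\lambda_0 R^2}\int\limits_{|z|>R}(z^T Q z)\,|v_n|^2\,dz \,\leq\, \frac{C^2}{\lambda_0 R^2}.
$$
This is the tightness estimate: the $L^2$-mass of $v_n$ outside a large ball is uniformly small.

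Next, for each fixed $R>0$, the restrictions $v_n|_{B_R(0)}$ are bounded in $H^1(B_R(0))$, since $\|\nabla v_n\|_{L^2(\mathbb{R}^d)}\leq C$ and $\|v_n\|_{L^2(B_R)}\leq C$ (the latter from $\|v_n\|_Q\leq C$ combined with, e.g., the estimate at $z=0$ or simply the inclusion $H_Q^1\subset L^2$). By the classical Rellich--Kondrachov compact embedding $H^1(B_R)\hookrightarrow L^2(B_R)$, I can extract a subsequence converging strongly in $L^2(B_R(0))$. Applying this with a sequence $R_k\to\infty$ and taking a diagonal subsequence (still denoted $\{v_n\}$), I obtain a function $v^\ast\in L^2_{\mathrm{loc}}(\mathbb{R}^d)$ such that $v_n\to v^\ast$ in $L^2(B_R(0))$ for every $R>0$.

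Finally, I would upgrade this local convergence to strong convergence in $L^2(\mathbb{R}^d)$ using tightness. Given $\eta>0$, choose $R$ so that $C^2/(\lambda_0 R^2)<\eta/8$; then for all $n,m$,
$$
\int\limits_{\mathbb{R}^d}|v_n-v_m|^2\,dz \,\leq\, \int\limits_{B_R(0)}|v_n-v_m|^2\,dz \,+\, 2\!\!\int\limits_{|z|>R}\!(|v_n|^2+|v_m|^2)\,dz \,\leq\, \int\limits_{B_R(0)}|v_n-v_m|^2\,dz + \tfrac{\eta}{2},
$$
and the first term is less than $\eta/2$ for $n,m$ large by the strong convergence on $B_R(0)$. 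Hence $\{v_n\}$ is Cauchy in $L^2(\mathbb{R}^d)$, giving $v_n\to v^\ast$ strongly in $L^2(\mathbb{R}^d)$.

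There is no real obstacle here; the argument is standard for Schrödinger operators with confining potentials. The only point requiring mild care is the diagonal extraction/tightness coupling in the last step, which I have written out above to make the proof self-contained.
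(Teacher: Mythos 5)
Your proof is correct and relies on the same two ingredients as the paper's argument: tightness of the $L^2$-mass from the confining potential $z^T Q z$, combined with the Rellich--Kondrachov compact embedding on bounded balls. The only cosmetic difference is that you conclude via a Cauchy-sequence-plus-diagonal argument, whereas the paper normalizes $\|v_n\|_{L^2}=1$, extracts a weak limit $v^\ast$, and shows $\|v^\ast\|_{L^2}=1$ so that weak convergence plus norm convergence upgrades to strong convergence; both closings are standard and equivalent in content.
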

\begin{proof}
Obviously, $v_n$, up to a subsequence, converges weakly in $L^2(\mathbb{R}^d)$ to some function $v^\ast$, $n \to \infty$. Let us prove that $\|v_n\|_{L^2(\mathbb{R}^d)} \to \|v^\ast\|_{L^2(\mathbb{R}^d)}$, as $n \to \infty$.

Since
$$
\int \limits_{\mathbb{R}^d}(z^T Q z)|v_n|^2\, dz \le C,
$$
one can show that for any $\delta>0$, there exists a ball $B_{R(\delta)}(0)$ such that
$$
\int \limits_{\mathbb{R}^d \setminus B_{R(\delta)}(0)}|v_n|^2\, dz \le \delta.
$$
Without loss of generality we assume that $\|v_n\|_{L^2(\mathbb{R}^d)}=1$. Then
\be
\label{4}
\|v_n\|_{L^2(B_{R(\delta)}(0))}^2 = 1 - \|v_n\|_{L^2(\mathbb{R}^d)\setminus B_{R(\delta)}(0))}^2 \ge 1 -\delta^2.
\ee
Since $\|v_n\|_{H^1(B_{R(\delta)}(0))}\le C$, then $\|v_n - v^\ast\|_{L^2(B_{R(\delta)}(0))} \to 0$, as $n \to \infty$. Passing to the limit in \eqref{4}, we have
$$
\|v^\ast\|_{L^2(\mathbb{R}^d)} \ge \|v^\ast\|_{L^2(B_{R(\delta)}(0))}^2 \ge 1 - \delta^2.
$$
On the other hand,
$$
\|v^\ast\|_{L^2(\mathbb{R}^d)} \le \liminf \limits_{n \to \infty} \|v_n\|_{L^2(\mathbb{R}^d)}=1.
$$
Combining the last two inequalities yields $\|v^\ast\|_{L^2(\mathbb{R}^d)} = 1$. Lemma is proved.
\end{proof}

\begin{lemma}
\label{lemma-mean-value}
\textbf{Mean-value theorem}\\
Let $\Phi\in L^2(Y)$ be such that $\int_Y \Phi\, dy =0$, and $V \in C^1(\mathbb{R}^d)$ satisfy the estimate
\be
\label{5}
|D^k V(z)| \le C\, e^{-\gamma_0 |z|^2}, \,\, \gamma>0, \,\, k=0,1.
\ee
Denote by $\chi(x)$ a cut-off which is equal to $1$ if $|x|<\frac{1}{3}\,{\rm dist}(0, \partial \Omega)$, equal to $0$ if $|x|>\frac{1}{2}\,{\rm dist}(0, \partial \Omega)$, and is such that
\be
\label{6}
0 \leq \chi(x) \leq 1, \quad
|\nabla \chi| \leq C.
\ee
Then the following estimate holds:
$$
\Big|\int \limits_{\Omega_\ve} \Phi\big(\frac{x}{\ve}\big)\, V\big(\frac{x}{\ve^{\alpha}}\big)\, \chi\big(x\big)\, W\big( \frac{x}{\ve^\alpha} \big)\, dx\Big| \le C\,\ve^{1-\alpha}\, \ve^{d \alpha} \, \|\Phi\|_{L^2(Y)}\, \|W\|_{H^1(\mathbb{R}^d)}
$$
for any $W\in H^1(\mathbb{R}^d)$.
\end{lemma}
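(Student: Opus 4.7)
The standard mean-value argument applies: the zero-mean hypothesis $\int_Y \Phi\,dy = 0$ lets us write $\Phi$ as the divergence of a periodic vector field, integration by parts then extracts a factor $\ve$, and the resulting integral is controlled via Cauchy--Schwarz together with the Gaussian decay of $V$. Concretely, I would first solve the auxiliary cell problem
\[
\Delta_y u = \Phi \text{ in } Y,\qquad \partial u/\partial n = 0 \text{ on } \Sigma^0,\qquad u \text{ periodic on }\partial K.
\]
Its compatibility condition is exactly $\int_Y \Phi\,dy = 0$, and standard elliptic theory produces a solution with $\|\nabla_y u\|_{L^2(Y)} \le C\|\Phi\|_{L^2(Y)}$. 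Setting $\Psi := \nabla_y u$, we obtain a $Y$-periodic vector field satisfying $\mathrm{div}_y\Psi = \Phi$ on $Y$ and, crucially, $\Psi\cdot n = 0$ on $\Sigma^0$.

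Next, using the identity $\Phi(x/\ve) = \ve\,\mathrm{div}_x[\Psi(x/\ve)]$, I would integrate by parts on $\Omega_\ve$. The boundary $\partial\Omega_\ve = \partial\Omega\cup\Sigma_\ve$ produces no contribution: on $\partial\Omega$ the cutoff $\chi$ vanishes, and on $\Sigma_\ve$ the normal trace of $\Psi(x/\ve)$ vanishes by periodicity. Thus
\[
I := \int_{\Omega_\ve}\Phi(x/\ve)V(x/\ve^\alpha)\chi(x)W(x/\ve^\alpha)\,dx = -\ve\int_{\Omega_\ve}\Psi(x/\ve)\cdot \nabla_x\bigl[V(x/\ve^\alpha)\chi(x)W(x/\ve^\alpha)\bigr]\,dx.
\]
Distributing the gradient gives three terms. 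After the change of variables $z = x/\ve^\alpha$ (Jacobian $\ve^{d\alpha}$), the derivatives hitting $V$ and $W$ each contribute a factor $\ve^{-\alpha}$, matching the target order $\ve^{1-\alpha+d\alpha}$; the derivative falling on $\chi$ is supported where $|z|\gtrsim \ve^{-\alpha}$ and is therefore super-exponentially small by \eqref{5}, hence negligible.

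For each of the two dominant terms, Cauchy--Schwarz reduces the estimate to controlling weighted integrals of the form $\int_{\mathbb{R}^d}|\Psi(\ve^{\alpha-1}z)|^2\,|\nabla V(z)|^2\,dz$ (and the analogous one with $V$ in place of $\nabla V$). Since $|\nabla V|^2$ and $|V|^2$ are bounded with Gaussian decay, splitting $\mathbb{R}^d$ into a large ball $B_R$ plus its complement and using the uniform-in-$\ve$ periodic bound $\int_{B_R}|\Psi(\ve^{\alpha-1}z)|^2\,dz\le C R^d\|\Psi\|_{L^2(Y)}^2$ gives $\le C\|\Psi\|_{L^2(Y)}^2 \le C\|\Phi\|_{L^2(Y)}^2$. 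Collecting the prefactors $\ve\cdot \ve^{d\alpha}\cdot \ve^{-\alpha}$ yields the claimed bound. The only delicate point is the cancellation of the boundary terms on $\Sigma_\ve$: the surface measure $|\Sigma_\ve|_{d-1}$ is of order $\ve^{-1}$, so a naive $\Psi$ with nonzero normal trace on $\Sigma^0$ would generate a surface contribution that destroys the estimate. Choosing $\Psi=\nabla_y u$ with the homogeneous Neumann condition on $\Sigma^0$ resolves this automatically and is the only nontrivial ingredient in the proof.
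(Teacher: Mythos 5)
Your proof is correct and follows essentially the same route as the paper's: both reduce to integrating by parts against a periodic vector potential with vanishing normal trace on $\Sigma^0$, then control the remaining integral by Cauchy--Schwarz together with the Gaussian decay of $V$. The only cosmetic difference is that you construct the potential explicitly as $\Psi=\nabla_y u$ from a Neumann/periodic cell problem, whereas the paper simply posits a vector field $\varphi$ with $-\mathrm{div}_y\varphi=\Phi$, $\varphi\cdot n=0$ on $\Sigma^0$, and $\|\varphi\|_{L^2(Y)}\le C\|\Phi\|_{L^2(Y)}$; these are the same object, and you correctly identified the zero-normal-trace property as the essential point.
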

\begin{proof}
Since $\int_Y \Phi\, dy=0$, then there exists a periodic vector function $\varphi(y)$ such that
$$
\left\{
\ba{l}
\disp
-{\rm div}_y\, \varphi(y) = \Phi(y), \quad y \in Y,
\\[2mm]
(\varphi,n)=0, \quad y \in \Sigma^0,
\ea
\right.
$$
and $\|\varphi\|_{L^2(Y)} \le C\, \|\Phi\|_{L^2(Y)}$. Changing the variables we have
$$
-\ve\, {\rm div}\, \varphi\big(\frac{x}{\ve}\big) = \Phi\big(\frac{x}{\ve}\big).
$$
Multiplying the last equation by $V\big(\frac{x}{\ve^\alpha}\big)\, \chi_\ve\big(x\big)\, W\big( \frac{x}{\ve^\alpha} \big)$, integrating by parts over $\Omega_\ve$ and using \eqref{5}, \eqref{6}  we get
$$
\ba{l}
\disp
\Big|\int \limits_{\Omega_\ve} \Phi\big(\frac{x}{\ve}\big)\, V\big(\frac{x}{\ve^{\alpha}}\big)\, \chi\big(x\big)\, W\big( \frac{x}{\ve^\alpha} \big)\, dx\Big|
\\[4mm]
\disp
= \ve\, \Big|\int \limits_{\Omega_\ve} \varphi\big(\frac{x}{\ve}\big)\cdot \nabla \Big[V\big(\frac{x}{\ve^{\alpha}}\big)\, \chi\big(x\big)\, W\big( \frac{x}{\ve^\alpha} \big)\Big]\, dx\Big|
\\[4mm]
\disp
\le
C\, \ve^{1-\alpha}\, \ve^{d\, \alpha}\, \int \limits_{\widetilde{\Omega_\ve}} \big|\varphi\big(\frac{z}{\ve^{1-\alpha}}\big)\big|\,
e^{-\gamma_0 |z|^2}\, \big[|W| + |\nabla W|\big]\, dz
\\[4mm]
\disp
\le
C\, \ve^{1-\alpha}\, \ve^{d\, \alpha}\, \|W\|_{H^1(\mathbb{R}^d)}\, \Big(\int \limits_{\widetilde{\Omega_\ve}} \big|\varphi\big(\frac{z}{\ve^{1-\alpha}}\big)\big|^2\,e^{-2\gamma_0 |z|^2}\,dz\Big)^{1/2}
\\[4mm]
\disp
\le C\, \ve^{1-\alpha}\, \ve^{d\, \alpha}\, \|W\|_{H^1(\mathbb{R}^d)}\, \|\varphi\|_{L^2(Y)}\, \Big(\int \limits_{\mathbb{R}^d} e^{-2\gamma_0 |z|^2}\,dz\Big)^{1/2}.
\ea
$$
The integral in the parenthesizes converges. Lemma~\ref{lemma-mean-value} is proved.
\end{proof}



\begin{thebibliography}{99}

\bibitem{AlPi-2002}(1900560)
\newblock G.~Allaire; A.~Piatnitski,
\newblock \emph{Uniform spectral asymptotics for singularly perturbed locally periodic operators.} (English summary)
\newblock Comm. Partial Differential Equations  27  (2002),  no. 3-4, 705–725.

\bibitem{FrSo-2009}(2506330)
\newblock L.~Friedlander, M.~Solomyak,
\newblock \emph{On the spectrum of the Dirichlet Laplacian in a narrow strip.} (English summary)
Israel J. Math.  170  (2009), 337–354.

\bibitem{KurGilb}
\newblock R. Courant and D. Hilbert,
\newblock Methoden der mathematischen Physik, vol. 1, Springer, Berlin, 1931; revised edition: Methods of Mathematical Physics, vol. 1, Interscience Publ., New York, 1953.

\bibitem{Mikh-PDE}(0701394)
\newblock Mikhailov, V. P.,
\newblock Differentsial'nye uravneniya v chastnykh proizvodnykh. (Russian) [Partial differential equations]
Second edition. ``Nauka'', Moscow, 1983. 424 pp.

\bibitem{Naz2002}
S.~Nazarov, Asymptotic analysis of this plates and rods, vol. 1, Novosibirsk, 2002 (in russian).

\bibitem{OlYoSh}
\newblock O.A.~Oleinik, G.A.~Yosifian, A.S.~Shamaev,
\newblock Mathematical problems in elasticity and homogenization,
\newblock Studies in Mathematics and its Applications, 26. North-Holland Publishing Co., Amsterdam, 1992.

\bibitem{Pastukh-01}(1845999)
\newblock Pastukhova, S. E.(RS-MIREA),
\newblock \emph{Spectral asymptotics for a stationary heat conduction problem in a perforated domain}. (Russian. Russian summary)
\newblock Mat. Zametki  69  (2001),  no. 4, 600--612;  translation in
Math. Notes  69  (2001),  no. 3-4, 546–558.

\bibitem{Van-81}(0635561)
\newblock Vanninathan, M.
\newblock \emph{Homogenization of eigenvalue problems in perforated domains.}
\newblock Proc. Indian Acad. Sci. Math. Sci. 90 (1981), no. 3, 239\u2013271.

\bibitem{VishLust}(MR0096041)
Visik, M. I., Lyusternik, L. A.
Regular degeneration and boundary layer for linear differential equations with small parameter. (Russian)
Uspehi Mat. Nauk (N.S.) {\bf 12} 1957 no. 5(77), 3--122.


\end{thebibliography}
\end{document}